\newcommand{\vp}{\varepsilon}
\newcommand{\bb}[1]{\mathbb{#1}}
\newcommand{\cl}[1]{\mathcal{#1}}
\theoremstyle{plain}
\newtheorem{thm}{Theorem}
\newtheorem{cor}{Corollary}
\theoremstyle{definition}
\newtheorem{defn}{Definition}
\theoremstyle{remark}
\begin{document}

\title{Overlap functions for measures in conformal iterated function systems}

\author{Eugen Mihailescu and Mariusz Urba\'nski}

\date{}
\maketitle

\begin{abstract}
We study conformal iterated function systems (IFS) $\mathcal S = \{\phi_i\}_{i \in I}$ with arbitrary overlaps, and measures $\mu$ on limit sets $\Lambda$, which are projections of equilibrium measures $\hat \mu$ with respect to a certain lift map $\Phi$ on $\Sigma_I^+ \times \Lambda$. No type of Open Set Condition is assumed. We introduce a notion of overlap function and overlap number for such a measure $\hat \mu$ with respect to $\cl S$; and, in particular a notion of (topological) overlap number $o(\cl S)$. These notions take in consideration the $n$-chains between points in the limit set.
We prove that $o(\cl S, \hat \mu)$ is related to a conditional entropy of $\hat \mu$ with respect to the lift $\Phi$. Various types of projections to $\Lambda$ of invariant measures are studied. We obtain upper estimates for the Hausdorff dimension $HD(\mu)$ of  $\mu$ on $\Lambda$, by using pressure functions and $o(\cl S, \hat \mu)$. In particular, this applies to projections of Bernoulli measures on $\Sigma_I^+$. 
Next, we apply the results to Bernoulli convolutions $\nu_\lambda$  for $\lambda \in (\frac 12, 1)$, which correspond to self-similar measures determined by composing, with equal probabilities, the contractions of an IFS with overlaps $\cl S_\lambda$. We prove that for \textit{all} $\lambda \in (\frac 12, 1)$, there exists a relation between  $HD(\nu_\lambda)$ and the overlap number $o(\cl S_\lambda)$. The number $o(\cl S_\lambda)$ is approximated with integrals on $\Sigma_2^+$ with respect to the  uniform Bernoulli measure $\nu_{(\frac 12, \frac 12)}$. We also estimate $o(\cl S_\lambda)$ for certain values of $\lambda$.
\end{abstract}

\textbf{Mathematics Subject Classification 2010:} 28A80, 28D05, 37C45, 37A35.

\textbf{Keywords:} Conformal iterated function systems with overlaps, equilibrium measures for H\"older potentials, one-sided symbolic spaces,  overlap numbers for fractals,  dimension of measures, pressure functions, Bernoulli convolutions.

\section{Introduction and outline.}

Iterated function systems (IFS) have been studied by many authors, and a lot about their theory is known. In many instances, systems which satisfy the Open Set Condition were studied. 
When arbitrary overlaps of the images of the contractions  are allowed, the theory is different and the results from the case of Open Set Condition do not work anymore.

Let us consider a finite set $I$ and an iterated function system $\cl S = \{\phi_i, i \in I\}$ consisting of injective conformal contractions $\phi_i$ defined on the closure of an open set $V \subset \bb R^q, q \ge 1$. 
Denote by $\Sigma_I^+$ the one-sided space $\{\omega = (\omega_1, \omega_2, \ldots), \omega_j \in I, j \ge 1\}$, with its shift endomorphism $\sigma: \Sigma_I^+ \to \Sigma_I^+, \sigma(\omega) = (\omega_2, \omega_3, \ldots)$. For an arbitrary sequence $\omega$ and for an integer $n \ge 1$, let the $n$-truncation $\omega|_n$ be the finite sequence $(\omega_1, \ldots, \omega_n)$. Also by $[i_1\ldots i_n]$ we denote the $n$-cylinder $\{\omega \in \Sigma_I^+, \ \omega_1=i_1, \ldots, \omega_n = i_n\}, \ n \ge 1, i_1, \ldots, i_n \in I$.

Let denote now by $\Lambda$ the fractal \textit{limit set} of the iterated function system $\cl S$, where: $$\Lambda:= \mathop{\cup}\limits_{\omega \in \Sigma_I^+} \mathop{\cap}\limits_{n \ge 1} \phi_{\omega|_n}(V)$$
Since all the maps $\phi_i$ are contractions, we can define the canonical coding map $\pi: \Sigma_I^+ \to \Lambda, \ \pi(\omega) = \mathop{\lim}\limits_{n \to \infty} \phi_{\omega_1}\circ \phi_{\omega_2}\circ\ldots\circ\phi_{\omega_n}(V)$, for all $\omega = (\omega_1, \omega_2, \ldots) \in \Sigma_I^+$. The singleton $\pi(\omega)$ will also be denoted by $\phi_{\omega_1}\circ \phi_{\omega_2}\circ \ldots$, as this infinite composition is in fact a point. We will denote the composition $\phi_{i_1}\circ \ldots \circ \phi_{i_m}$ also by $\phi_{i_1\ldots i_m}$, for $m \ge 1, i_j \in I, 1\le j \le m$. The map $\pi$ is  called the canonical projection onto the limit set $\Lambda$ of the system  $\cl S$.
Various properties of IFS's with overlaps were studied by several authors, for eg in \cite{F}, \cite{So}, \cite{H}, \cite{PSS}, \cite{PS}, \cite{MU-PAMS11}, etc.  \
Let us fix some more terminology and notation.

\begin{defn}\label{ov}
By \textit{overlaps} we mean intersections of type $\phi_i( \Lambda) \cap \phi_j(\Lambda) \ne \emptyset, \ i \ne j$. 
If for a point $x \in \Lambda$ and an integer $m \ge 1$,  there exists a point $\zeta \in \Lambda$ and a finite sequence $i_1, \ldots i_m \in I$ such that $\phi_{i_1}\circ \ldots \circ \phi_{i_m}(\zeta) = x$, then $\zeta$ is called an \textit{m-root} of $x$, and $(i_1, \ldots, i_m)$ is called an \textit{m-chain} from $\zeta$ to $x$.
\end{defn}

In general, the number of roots/overlaps depends on the point $x \in \Lambda$, so it is not constant. Notice also that the $m$-chain from a certain root $\zeta$ to $x$ is not uniquely defined, i.e there may exist two different $m$-chains $(i_1, \ldots, i_m)$ and $(j_1, \ldots, j_m)$ so that $\phi_{i_1\ldots i_m}(\zeta) = \phi_{j_1\ldots j_m}(\zeta) = x$.
Considering the above,  how can we define a good notion of average number of overlaps of the IFS $\cl S$, and how is such a notion dependent on a probability measure $\mu$ on $\Lambda$; also, how does such a number of overlaps affect the Hausdorff dimension of $\mu$?
It is clear that we have to look at $n$-roots of points, since the limit set $\Lambda$ is invariant under the system $\cl S$, i.e $\Lambda = \mathop{\cup}\limits_{i \in I} \phi_i(\Lambda)$, thus for $k$-iterations of $\cl S$ we have $\Lambda = \mathop{\cup}\limits_{i_1, \ldots, i_k \in I} \phi_{i_1\ldots i_k}(\Lambda)$, for any $k \ge 2$. This hints to the fact that the  overlap number should be given by an average rate of growth of the number of $n$-chains between points in the limit set.
Another question is, what probabilities $\mu$ on $\Lambda$ should be considered, and what roots in $\Lambda$ do we use. Some $n$-roots and $n$-chains which are non-generic with respect to $\mu$ and to a lift map $\Phi: \Sigma_I^+ \times \Lambda \to \Sigma_I^+ \times \Lambda$ will thus be ignored when defining the overlap number relative to $\mu$. 

Besides the canonical coding projection $\pi: \Sigma_I^+ \to \Lambda$, one can consider also the projection $\pi_2: \Sigma_I^+ \times \Lambda \to \Lambda, \ \pi_2(\omega, x) = x$, and the projection $\tilde \pi: \Sigma_I^+ \times \Sigma_I^+ \to \Sigma_I^+ \times \Lambda, \ \tilde \pi(\omega, \eta) = (\omega, \pi\eta)$; so we obtain projections of $\sigma$-invariant measures on $\Sigma_I^+$, $\Phi$-invariant measures on $\Sigma_I^+ \times \Lambda$ or $\tilde \Phi$-invariant measures on $\Sigma_I^+ \times \Sigma_I^+$ (where $\tilde \Phi$ is a lift of $\Phi$ to $\Sigma_I^+ \times \Sigma_I^+$).  In \textbf{Theorem \ref{equal}} we will prove that, for Bernoulli measures, the corresponding projection measures on $\Lambda$ are in fact the same.

We introduce a notion of \textit{overlap number} $o(\cl S, \hat \mu_\psi)$ associated to a $\Phi$-invariant Gibbs state $\hat\mu_\psi$ on $\Sigma_I^+ \times \Lambda$ (and to its $\pi_2$-projection $\mu_\psi$ on $\Lambda$), and we use thermodynamic formalism to relate it to the dimension of $\mu_\psi$.
In \textbf{Theorem \ref{fold}} and \textbf{Corollary \ref{maxent}} we show that the overlap number $o(\cl S, \hat \mu_\psi)$  is related to the folding entropy of $\hat\mu_\psi$ with respect to the lift map $\Phi$. 
In particular, this applies to Bernoulli measures on $\Sigma_I^+$ and their lifts on $\Sigma_I^+ \times \Lambda$. When $\mu=\mu_0$ is the projection of the measure of maximal entropy $\hat\mu_0$ from $\Sigma_I^+\times \Lambda$, one obtains a topological overlap number  $o(\cl S)$ of  $\cl S$, which quantifies the average level of overlapping in $\cl S$, and indicates how far is $\cl S$ from satisfying the Open Set Condition. By using Theorem \ref{equal}, we compute in \textbf{Corollary \ref{usorcalc}} the overlap number  $o(\cl S)$ as a limit of integrals over $\Sigma_I^+$ w.r.t the uniform Bernoulli measure $\nu_{(\frac {1}{|I|}, \ldots, \frac{1}{|I|})}$. And in general for Bernoulli measures $\nu_{\bf p}$, Corollary \ref{usorcalc} gives a simpler formula for $o(\cl S, \hat \mu_{\bf p})$.

Next,  in \textbf{Theorem \ref{dim}} we use the overlap number of  $\hat \mu_\psi$ to obtain
estimates for the Hausdorff dimension of a set of full $\mu_\psi$-measure in $\Lambda$, which set is constructed explicitly. This gives upper bounds for $HD(\mu_\psi)$, by using zeros of pressure functions associated to $o(\cl S, \hat \mu_\psi)$, which are computable in certain cases of interest. 

In \textbf{Section 3} we apply the results to the case of Bernoulli convolutions $\nu_\lambda$ for $\lambda \in (\frac 12, 1)$, where $\nu_\lambda$ gives the distribution of the random series $\mathop{\sum}\limits_{n\ge 0} \pm \lambda^n$ with the $+, -$ signs taken independently and with equal probabilities. In this case, one has an iterated function system with overlaps $\cl S_\lambda$, whose limit set is an interval $I_\lambda$, and $\nu_\lambda$ appears as the projection of the measure of maximal entropy $\nu_{(\frac 12, \frac 12)}$ from $\Sigma_2^+$ to $I_\lambda$. Bernoulli convolutions have attracted a lot of attention (see \cite{PSS}), starting with Erd\"os \cite{E} who showed that $\nu_\lambda$ is singular for $\lambda^{-1}$ Pisot; then, continuing with the result of Solomyak \cite{So} about the absolute continuity of $\nu_\lambda$ for Lebesgue-a.e $\lambda \in (\frac 12, 1)$, and the result of Przytycki and Urba\'nski \cite{PU} that $HD(\nu_\lambda) < 1$ for $\lambda^{-1}$ Pisot, and then with more recent results,  for example, by Hochman \cite{H} about the dimension of $\nu_\lambda$ for $\lambda$ outside a set of dimension zero in $(\frac 12, 1)$.

In \textbf{Theorem \ref{dimest}} we find a relation between $HD(\nu_\lambda)$ and the overlap number $o(\cl S_\lambda)$, for all $\lambda \in (\frac 12, 1)$. We  show how to approximate $o(\cl S_\lambda)$ with integrals on $\Sigma_2^+$ with respect to the uniform Bernoulli measure $\nu_{(\frac 12, \frac 12)}$.  By using known results on $HD(\nu_\lambda)$, one obtains then upper estimates for $o(\cl S_\lambda)$; in particular, one can estimate $o(\cl S_\lambda)$ more precisely for specific values of $\lambda$, like $\lambda = 2^{-\frac 1m}, m \ge 2$ (i.e $ \frac 1\lambda$ non-Pisot),  or $\lambda = \frac{\sqrt 5 -1}{2}$ (i.e $\frac 1\lambda$ Pisot). 
 In Corollary \ref{2} we prove that $o(\cl S_\lambda)$ is strictly less than 2, for all $\lambda \in (\frac 12, 1)$. 
In the end, we obtain dimension estimates for biased Bernoulli convolutions $\nu_{\lambda, p}$, for $\lambda \in (\frac 12, 1)$ and $p \in (0, 1)$. \ 
The results about overlap numbers can be applied also to other conformal iterated function systems with overlaps.

\section{Overlap numbers of measures and dimension estimates.}

First, let us define an \textit{overlap lift function} which allows to associate the dynamics of a map to our IFS $\cl S$. With regard to this function, the contractions $\phi_i$ appear as restrictions to cylinders $[i], i \in I$.  

\begin{defn}\label{olf}
In the above setting, for the finite IFS $\cl S = \{\phi_i\}_{i \in I}$, define the \textit{overlap lift map} 
$$\Phi: \Sigma_I^+ \times \Lambda \to \Sigma^+_I \times \Lambda, \ \Phi(\omega, x) = (\sigma \omega, \phi_{\omega_1}(x)), \ (\omega, x) \in \Sigma_I^+ \times \Lambda$$

\end{defn}

Let us now consider a H\"older continuous function $\psi: \Sigma_I^+\times \Lambda \to \bb R$. Since the  lift map $\Phi$ is distance-expanding in the first coordinate and contracting in the second coordinate, it follows that it is expansive and we can apply the theory of equilibrium states (for eg \cite{KH}, \cite{Wa}). As $\psi$ is H\"older, there exists a unique equilibrium measure for $\psi$ with respect to $\Phi$ on $\Sigma_I^+ \times \Lambda$, denoted by $\hat \mu_\psi$.

In particular, if we take a H\"older continuous function $g: \Lambda \to \mathbb R$ and the associated function $\psi_g: \Sigma_I^+\times \Lambda \to \mathbb R, \ \psi_g(\omega, x) = g(x)$, then we have the equilibrium measure $\hat\mu_{\psi_g}$ on $\Sigma_I^+\times \Lambda$ (relative to $\Phi$) and its projection $(\pi_2)_*(\hat\mu_{\psi_g})$ on $\Lambda$, where $\pi_2$ is the projection on the second coordinate. In general this measure is different from the projection $\pi_*(\bar \mu_{g\circ \pi})$, where $\pi: \Sigma_I^+ \to \Lambda, \pi(\omega) = \phi_{\omega_1}\circ \ldots$, and where in general $\bar \mu_\chi$ denotes the equilibrium measure of a H\"older continuous $\chi$ on $\Sigma_I^+$ (relative to the shift $\sigma$).

\

For any $n \ge 1$ and any $(\omega, x) \in \Sigma_I^+ \times \Lambda$, we have $\Phi^n(\omega, x) = (\sigma^n\omega, \phi_{\omega_n}\circ \phi_{\omega_{n-1}} \circ \ldots \circ \phi_{\omega_1}(x))$. 
Notice that, if $\eta_1, \ldots, \eta_n$ are given and if $\phi_{\omega_n}\circ \ldots \circ \phi_{\omega_1}(x) = \phi_{\eta_n}\circ \ldots \circ \phi_{\eta_1}(y)$, then from the injectivity of the contractions $\phi_i, i \in I$, there exists \textit{exactly one} point $y$ with this property. By Definition \ref{ov}, this means that, given the $n$-chain $(\eta_n, \ldots, \eta_1)$ as above, the corresponding $n$-root $y$ is uniquely defined such that $(\eta_n, \ldots, \eta_1)$ is an $n$-chain from $y$ to $\phi_{\omega_n\ldots \omega_1}(x)$. \ \    

Given now a measure $\hat \mu_\psi$ as above, an arbitrary point $(\omega, x) \in \Sigma_I^+ \times \Lambda$, and $\tau >0$, define the set of $n$-chains from points in $\Lambda$ to $\phi_{\omega_n\ldots\omega_1}(x)$, which are $\tau$-generic relative to $\hat \mu_\psi$: 
\begin{equation}\label{gen}
\Delta_n\big((\omega, x), \tau, \hat \mu_\psi\big)= \{(\eta_1, \ldots, \eta_n) \in I^n, \ \exists y \in \Lambda, \ \phi_{\eta_n\ldots\eta_1}(y) = \phi_{\omega_n\ldots\omega_1}(x) \ \text{and} \ |\frac{S_n\psi(\eta, y)}{n} - \int_{\Sigma_I^+\times \Lambda} \psi d\hat \mu_\psi| < \tau\},
\end{equation}
where $\eta = (\eta_1, \ldots, \eta_n, \omega_{n+1}, \omega_{n+2}, \ldots) \in \Sigma_I^+$, and where $S_n\psi(\eta, y) = \psi(\eta, y) + \psi(\Phi(\eta, y)) + \ldots +\psi(\Phi^n(\eta, y))$. We denote the cardinality of the set $\Delta_n$ by $b_n$, so $$b_n((\omega, x), \tau,  \hat \mu_\psi) := \text{Card} \ \Delta_n\big((\omega, x), \tau,  \hat \mu_\psi\big), \  \forall (\omega, x) \in \Sigma_I^+ \times \Lambda$$
 Remark that, if $(i_1, \ldots, i_n) \in \Delta_n\big((\omega, x), \tau, \hat\mu_\psi\big)$ with corresponding $n$-root $y$ of $\phi_{\omega_n\ldots\omega_1}(x)$, then $\Delta_n\Big(\big((i_1, \ldots, i_n, \omega_{n+1}, \omega_{n+2}, \ldots), y\big), \tau, \hat\mu_\psi\Big) = \Delta_n\Big((\omega, x), \tau, \hat\mu_\psi\Big)$.

\begin{defn}\label{of}
Given a H\"older continuous potential $\psi$ on $\Sigma_I^+\times \Lambda$ and $\tau>0$, we call $b_n(\cdot, \tau, \hat\mu_\psi): \Sigma_I^+\times \Lambda \to \mathbb N$ the $n$-\textbf{overlap function} associated to the measure $\hat\mu_\psi$ and $\tau$.
\end{defn} 

The function $b_n(\cdot, \tau, \hat\mu_\psi)$ is measurable and bounded, but in general discontinuous on $\Sigma_I^+\times \Lambda$.
In the sequel we will use the folding entropy of a $\Phi$-invariant measure $\mu$ on $\Sigma_I^+ \times \Lambda$; for general folding entropy see \cite{Ru-fold} (and for entropy production, also \cite{Ru-survey}, \cite{MU-JSP}). 
The folding entropy of a $\Phi$-invariant probability $\mu$ with respect to $\Phi: \Sigma_I^+ \times \Lambda \to \Sigma_I^+ \times \Lambda$,  is defined as the conditional entropy $F_{\Phi}(\mu) := H_{\mu}(\epsilon|\Phi^{-1}\epsilon)$, where $\epsilon$ is the point partition of the Lebesgue space $\Sigma_I^+ \times \Lambda$. \
In \cite{Pa} Parry introduced a notion of Jacobian of an invariant measure for an endomorphism, and studied its properties; in particular, the Jacobian satisfies the Chain Rule. Given a map $f:X \to X$ on a Lebesgue space $X$ and an $f$-invariant probability measure $\mu$, such that $f$ is essentially countable-to-one, we denote the Jacobian of $\mu$ by $J_f(\mu)$. From above and \cite{Pa} it follows that, in general, the folding entropy of a measure $\mu$ is equal to the integral of the logarithm of the Jacobian of $\mu$. So in our case,  the folding entropy of $\hat \mu_\psi$ with respect to $\Phi$  is given by: $$F_\Phi(\hat \mu_\psi) = \int_{\Sigma_I^+\times\Lambda}\log J_{\Phi}(\hat \mu_\psi) \ d\hat\mu_\psi$$

\

We investigate now the structure of the $\Phi$-invariant probabilities on the product space $\Sigma_I^+\times \Lambda$. 
Let define also the lift homeomorphism $\tilde \Phi$ on $\Sigma_I^+ \times \Sigma_I^+$, namely: $$\tilde \Phi:\Sigma_I^+\times \Sigma_I^+\to \Sigma_I^+\times \Sigma_I^+, \ \tilde \Phi(\omega, \eta) = (\sigma\omega, \omega_1\eta)$$  If $\tilde\pi(\omega, \eta):=(\omega, \pi(\eta))$, for $(\omega, \eta) \in \Sigma_I^+\times \Sigma_I^+$, then we obtain the following diagram of maps on $\Sigma_I^+\times \Sigma_I^+$, respectively $\Sigma_I^+\times \Lambda$, where both vertical maps below are equal to $\tilde \pi: \Sigma_I^+\times \Sigma_I^+ \to \Sigma_I^+\times \Lambda$:
\begin{equation}\label{dia}
\begin{array}{clclcr}
\Sigma_I^+\times \Sigma_I^+ & \ \ &\mathop{\longrightarrow}\limits^{\tilde \Phi} & \ \  &\Sigma_I^+\times \Sigma_I^+ \\
\downarrow &   & \ \ \ &  & \downarrow \\
\Sigma_I^+\times \Lambda & \ \ & \mathop{\longrightarrow}\limits^{\Phi} & \ \ & \Sigma_I^+\times \Lambda
\end{array}
\end{equation}
This diagram is commutative. Indeed, $\tilde\pi\circ\tilde\Phi(\omega, \eta) = (\sigma\omega, \pi(\omega_1\eta)=(\sigma\omega, \phi_{\omega_1}\circ\phi_{\eta_1}\circ\phi_{\eta_2}\circ\ldots)$; on the other hand, $\Phi\circ\tilde\pi(\omega, \eta) = \Phi(\omega, \phi_{\eta_1}\circ\phi_{\eta_2}\circ\ldots) = (\sigma\omega, \phi_{\omega_1}\circ\phi_{\eta_1}\circ\ldots)$. Hence $\tilde \pi \circ \tilde\Phi = \Phi\circ \tilde\pi$.

Also $\tilde \Phi$ is a homeomorphism. 
Then as in \cite{Ru-78}, by using Hahn-Banach Theorem and Markov-Kakutani Theorem and by approximating integrals of   functions  from $\mathcal{C}(\Sigma_I^+\times \Sigma_I^+, \mathbb R)$ with integrals of functions $g\circ\tilde\pi\circ\tilde \Phi^n, n\in \mathbb Z$, for $g \in \mathcal{C}(\Sigma_I^+\times \Lambda, \mathbb R)$, it follows that for any $\Phi$-invariant probability $\nu$ on $\Sigma_I^+\times \Lambda$, there exists a unique $\tilde \Phi$-invariant probability $\tilde \nu$ on $\Sigma_I^+\times \Sigma_I^+$ such that $\tilde \pi_*(\tilde\nu) = \nu$. 
In particular, the equilibrium measure $\hat\mu_\psi$ of the H\"older continuous $\psi$ on $\Sigma_I^+\times \Lambda$, is the $\tilde \pi$-projection of the equilibrium measure $\tilde\mu_{\tilde\psi}$ of $\tilde\psi:=\psi\circ\tilde \pi$ on $\Sigma_I^+\times \Sigma_I^+$.
Hence, the measure of maximal entropy $\hat\mu_0$ on $\Sigma_I^+\times \Lambda$ is the $\tilde\pi$-projection of the measure of maximal entropy $\tilde \mu_0$ for $\tilde \Phi$ on $\Sigma_I^+\times \Sigma_I^+$, i.e $$\hat\mu_0 = \tilde\pi_*(\tilde\mu_0)$$
Moreover,  the topological entropy of the map $\Phi$ is equal to the topological entropy of the shift $\sigma: \Sigma_I^+ \to \Sigma_I^+$, i.e $\log |I|$, because in the second coordinate we have contractions, so the separated sets are determined only by the expansion $\sigma$ in the first coordinate. With the canonical distance on $\Sigma_I^+$, $d(\omega, \eta) = \mathop{\sum}\limits_{i\ge 1}\frac{|\omega_i- \eta_i|}{2^i}$, the ball of center $\omega$ and radius $\frac{1}{2^n}$ is the cylinder $[\omega_1, \ldots, \omega_n]$, so $B((\omega, x), \frac{1}{2^n}) = [\omega_1, \ldots, \omega_n] \times B(x, \frac{1}{2^n})$.  
  If we consider $n$-roots of $x$ and the measure of maximal entropy $\hat\mu_0$ w.r.t $\Phi$, then all these $n$-roots are generic. Since in this case the overlap function $b_n$ does not depend on $\tau$, we  denote it simply by $b_n(\omega, x)$, for $(\omega, x) \in \Sigma_I^+\times \Lambda$.

\

In general, there are several ways to define \textbf{projections of invariant measures} on the fractal limit set $\Lambda$, depending whether we project $\sigma$-invariant measures on $\Sigma_I^+$, or $\Phi$-invariant measures on $\Sigma_I^+ \times \Lambda$, or $\tilde \Phi$-invariant measures on $\Sigma_I^+ \times \Sigma_I^+$. In many cases, for example for Bernoulli measures, these projections will be shown to coincide. 
Let us first consider a H\"older continuous potential $\psi$ on $\Sigma_I^+\times \Lambda$, and as above let  $\hat\mu_\psi$ its (unique) equilibrium state on $\Sigma_I^+\times \Lambda$;  if $\pi_2: \Sigma_I^+\times \Lambda \to \Lambda$ is the projection on the second coordinate $\pi_2(\omega, x) = x$,  denote  the projection measure on $\Lambda$ by:
\begin{equation}\label{mupsi}
\mu_\psi:=(\pi_2)_*(\hat\mu_\psi)
\end{equation}
Consider next $g$  a H\"older continuous potential on $\Sigma_I^+$, and let $\bar \mu_g$ be its unique equilibrium measure on $\Sigma_I^+$. 
Then we can define two kinds of projection measures on $\Lambda$. The first type is $\mu_\psi$ defined above in (\ref{mupsi}), where $\psi = g \circ \pi_1$; so $\mu_\psi = (\pi_2)_*(\hat \mu_\psi)$. The second type is the self-conformal measure:
\begin{equation}\label{sc}
\pi_*(\bar \mu_g),
\end{equation}
 where $\pi: \Sigma_I^+ \to \Lambda, \  \pi(\omega_1\omega_2\ldots) = \phi_{\omega_1}\circ \phi_{\omega_2} \circ\ldots$ \  is the canonical coding map for $\Lambda$. 

We now prove that, for \textbf{Bernoulli measures} on $\Sigma_I^+$, the two types of projection measures defined above, are in fact equal. This will make our results about overlap numbers apply to $\pi$-projections of Bernoulli measures onto $\Lambda$. Consider then a Bernoulli measure $\nu_{\mathbf{p}}$ on $\Sigma_I^+$ determined by an arbitrary probabilistic vector $\mathbf p = (p_1, \ldots, p_{|I|})$. Thus the $\nu_{\mathbf p}$-measure of the cylinder $[\omega_1, \ldots, \omega_n]= \{\eta \in \Sigma_I^+, \eta_1=\omega_1, \ldots, \eta_n = \omega_n\}$, is equal to $p_{\omega_1} \ldots p_{\omega_n} $ for any $n \ge 1$ and $\omega_i \in I, 1 \le i \le n.$ \
Consider the potential $\phi: \Sigma_I^+ \to \mathbb R, \ \phi(\omega_1\omega_2 \ldots) = \log p_{\omega_1}$, for $\omega = (\omega_1, \omega_2, \ldots) \in \Sigma_I^+$. 
Then $S_n\phi(\omega) = \phi(\omega) + \phi(\sigma(\omega)) + \ldots + \phi(\sigma^{n-1}(\omega)) = \log p_{\omega_1} \ldots p_{\omega_n}$. By taking Bowen balls for the shift $\sigma$ (which are cylinders in our case), we see immediately that $$P_\sigma(\phi) = 0$$  Clearly,  $\phi$ is H\"older continuous on $\Sigma_I^+$ and its unique equilibrium measure $\bar \mu_\phi$ is equal to the Bernoulli measure $\nu_{\mathbf p}$; this is  due to the expression of $\bar \mu_\phi$ on cylinders $[\omega_1 \ldots \omega_n]$ (see \cite{Bo}, \cite{KH}), i.e $$\frac 1C e^{S_n\phi(\omega) - nP_\sigma(\phi)} \le \bar\mu_\phi(B_n(\omega, \vp)) \le C e^{S_n\phi(\omega) - nP_\sigma(\phi)},$$ so we conclude that $$\bar \mu_\phi = \nu_{\mathbf p}$$
In case of Bernoulli measures, we can now prove that the various projection measures are \textbf{equal} on $\Lambda$:

\begin{thm}\label{equal}
In the above setting, let $\mathbf p = (p_1, \ldots, p_{|I|})$ an arbitrary probabilistic vector, and $\psi: \Sigma_I^+\times \Lambda \to \mathbb R, \ \psi((\omega_1\ldots), x) := \log p_{\omega_1}$, with $\hat\mu_{\psi}$ denoting the unique equilibrium measure of $\psi$ with respect to  $\Phi: \Sigma_I^+ \times \Lambda \to \Sigma_I^+ \times \Lambda$. Then the following measures  are equal on $\Lambda$:  
$$
 \pi_* \nu_{\mathbf p} = \pi_{2*} \hat\mu_{\psi} = (\pi_2 \circ \tilde \pi)_*(\nu_{\textbf{p}} \times \nu_{\textbf{p}}),
$$
where $\pi_2: \Sigma_I^+ \times \Lambda \to \Lambda, \ \pi_2(\omega, x) = x$,  and $\pi: \Sigma_I^+ \to \Lambda$ is the canonical coding map, and where $\tilde \pi: \Sigma_I^+ \times \Sigma_I^+ \to \Sigma_I^+ \times \Lambda, \ \tilde \pi(\omega, \eta) = (\omega, \pi(\eta))$.
\end{thm}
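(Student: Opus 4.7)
The plan is to reduce the three equalities to the identification of a single equilibrium measure on $\Sigma_I^+\times\Sigma_I^+$, and then to push everything down via the maps in diagram (\ref{dia}). The starting point is the fact, already established in the excerpt through the Hahn--Banach/Markov--Kakutani argument à la Ruelle, that every $\Phi$-invariant probability on $\Sigma_I^+\times\Lambda$ has a unique $\tilde\pi$-lift; in particular, $\hat\mu_\psi=\tilde\pi_*(\tilde\mu_{\tilde\psi})$ with $\tilde\psi:=\psi\circ\tilde\pi$, and here $\tilde\psi(\omega,\eta)=\log p_{\omega_1}$ depends only on the first letter of $\omega$.

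The crux is to show $\tilde\mu_{\tilde\psi}=\nu_{\mathbf p}\times\nu_{\mathbf p}$. I would prove this by observing that $\tilde\Phi$ is topologically conjugate to the two-sided full shift on $\Sigma_I^{\mathbb Z}$ via the natural bijection
\[
(\omega,\eta)\;\longleftrightarrow\;(\ldots,\omega_3,\omega_2,\omega_1,\eta_1,\eta_2,\ldots),
\]
under which $\nu_{\mathbf p}\times\nu_{\mathbf p}$ corresponds to the two-sided Bernoulli measure $\nu_{\mathbf p}^{\mathbb Z}$ and $\tilde\psi$ becomes a locally constant potential depending on a single fixed coordinate. The standard uniqueness theorem for equilibrium states of expansive homeomorphisms (\cite{Bo}, \cite{KH}) then identifies its unique equilibrium state with $\nu_{\mathbf p}^{\mathbb Z}$, in exact parallel with the one-sided computation $\bar\mu_\phi=\nu_{\mathbf p}$ already carried out in the excerpt. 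As a backup route avoiding the conjugacy, one may work directly on $\Sigma_I^+\times\Sigma_I^+$: check $\tilde\Phi$-invariance via the cylinder identity $\tilde\Phi^{-1}([b_1\ldots b_l]\times[a_1\ldots a_k])=[a_1,b_1,\ldots,b_l]\times[a_2,\ldots,a_k]$, whose $(\nu_{\mathbf p}\times\nu_{\mathbf p})$-measure is manifestly $p_{a_1}\cdots p_{a_k}\,p_{b_1}\cdots p_{b_l}$, then verify the Gibbs inequalities for $\tilde\psi$ with $P_{\tilde\Phi}(\tilde\psi)=0$, and quote uniqueness.

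Once $\tilde\mu_{\tilde\psi}=\nu_{\mathbf p}\times\nu_{\mathbf p}$ is in hand, the rest is a formal push-forward. The second equality of the theorem follows by functoriality:
\[
\pi_{2*}\hat\mu_\psi\;=\;\pi_{2*}\tilde\pi_*(\nu_{\mathbf p}\times\nu_{\mathbf p})\;=\;(\pi_2\circ\tilde\pi)_*(\nu_{\mathbf p}\times\nu_{\mathbf p}).
\]
For the first equality, I would note that $(\pi_2\circ\tilde\pi)(\omega,\eta)=\pi(\eta)$ is independent of $\omega$, so for any $f\in\mathcal{C}(\Lambda,\mathbb R)$, Fubini gives
\[
\int_\Lambda f\,d(\pi_2\circ\tilde\pi)_*(\nu_{\mathbf p}\times\nu_{\mathbf p})\;=\;\int_{\Sigma_I^+}\!\!\int_{\Sigma_I^+}\! f(\pi\eta)\,d\nu_{\mathbf p}(\omega)\,d\nu_{\mathbf p}(\eta)\;=\;\int_\Lambda f\,d\pi_*\nu_{\mathbf p}.
\]
The main obstacle I anticipate is making the conjugacy of $\tilde\Phi$ with the two-sided full shift fully rigorous enough to transport the uniqueness statement; this is why I would keep the Gibbs-property alternative in reserve, since it stays on $\Sigma_I^+\times\Sigma_I^+$ and needs only the elementary cylinder computation above together with the uniqueness of equilibrium states for the expansive map $\Phi$ invoked at the start of Section 2.
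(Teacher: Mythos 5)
Your proof is correct, and it reaches the conclusion by a genuinely different central lemma than the paper, even though both arguments pass through the same lift $\tilde\mu_{\tilde\psi}$ on $\Sigma_I^+\times\Sigma_I^+$ with $\hat\mu_\psi=\tilde\pi_*(\tilde\mu_{\tilde\psi})$. You identify $\tilde\mu_{\tilde\psi}=\nu_{\mathbf p}\times\nu_{\mathbf p}$ \emph{first}, either by conjugating $\tilde\Phi$ to the two-sided full shift (strictly, to the \emph{inverse} of the left shift under your coding $(\omega,\eta)\mapsto(\ldots,\omega_2,\omega_1,\eta_1,\eta_2,\ldots)$, which is harmless since $\sigma$ and $\sigma^{-1}$ have the same invariant measures, the same entropy function, and hence the same equilibrium states) and invoking uniqueness of equilibrium states for a single-coordinate potential, or by verifying invariance and the Gibbs inequalities for $\tilde\psi$ with $P_{\tilde\Phi}(\tilde\psi)=0$ directly on cylinders; all three equalities then fall out as push-forwards, with the first one reduced to the observation that $\pi_2\circ\tilde\pi$ depends only on $\eta$. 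The paper instead proves the first equality $\pi_*\nu_{\mathbf p}=\pi_{2*}\hat\mu_\psi$ without yet knowing the product structure: it uses the dynamical identity $\tilde\Phi^i([\omega_1\ldots\omega_n]\times\Sigma_I^+)=[\omega_{i+1}\ldots\omega_n]\times[\omega_i\ldots\omega_1]$ together with $\tilde\Phi$-invariance and the known first marginal $\pi_{1*}\hat\mu_\psi=\nu_{\mathbf p}$ to compute cylinder measures of the form $\tilde\mu_{\tilde\psi}(\Sigma_I^+\times[\eta_1\ldots\eta_j])$, and only afterwards bootstraps by induction to the full product formula $\tilde\mu_{\tilde\psi}([\omega_1\ldots\omega_k]\times[\eta_1\ldots\eta_m])=p_{\omega_1}\cdots p_{\omega_k}p_{\eta_1}\cdots p_{\eta_m}$. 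Your route is more conceptual and derives everything from one identification, at the cost of having to make the conjugacy (or the Gibbs verification for the homeomorphism $\tilde\Phi$, where one should say which Bowen balls are meant) fully precise; the paper's route is more hands-on and stays entirely at the level of elementary cylinder computations, never needing the Gibbs property of the product measure. Your fallback cylinder identity $\tilde\Phi^{-1}([b_1\ldots b_l]\times[a_1\ldots a_k])=[a_1,b_1,\ldots,b_l]\times[a_2,\ldots,a_k]$ is checked correctly and does give invariance of $\nu_{\mathbf p}\times\nu_{\mathbf p}$, so the backup route closes as well.
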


\begin{proof}
In order to prove the first equality, 
let us define $\tilde \psi = \psi \circ \tilde \pi$, where $\tilde \pi(\omega, \eta) = (\omega, \pi\eta)$. So $\tilde \psi$ is a H\"older continuous potential on $\Sigma_I^+ \times \Sigma_I^+$. Then recalling that $\tilde \Phi(\omega, \eta) = (\sigma\omega, \omega_1 \eta)$ is an expansive homeomorphism with specification property, it follows (\cite{KH}) that there exists a unique equilibrium measure $\tilde \mu_{\tilde \psi}$ on $\Sigma_I^+ \times \Sigma_I^+$. 
Also we have the projection $\tilde \pi(\omega, \eta) = (\omega, \pi \eta)$ from $\Sigma_I^+ \times \Sigma_I^+$ to $\Sigma_I^+ \times \Lambda$. Moreover, from definitions it can be seen that $$\tilde \pi \tilde \Phi(\omega, \eta) = (\sigma \omega, \phi_{\omega_1}(\pi \eta)) = \Phi\circ \tilde \pi(\omega, \eta),$$ so $\tilde \pi \circ \tilde \Phi = \Phi \circ \tilde \pi$. This implies that $\tilde \pi_*(\tilde \mu_{\tilde \psi}) = \hat \mu_\psi$, i.e the projection to  $\Sigma_I^+ \times \Lambda$ of the equilibrium measure of $\tilde \psi$ on $\Sigma_I^+ \times \Sigma_I^+$, is equal to the equilibrium measure of $\psi$.  Hence from above, 
\begin{equation}\label{mu2}
\pi_{2*}(\hat \mu_\psi)(A) = \hat \mu_\psi(\pi_2^{-1}(A)) = \tilde \mu_{\tilde \psi}(\Sigma_I^+\times \pi^{-1}(A))
\end{equation}
On the other hand, notice that the Bowen ball for $\tilde \Phi$ is given by $B_n((\omega, \eta), \vp) = [\omega_1 \ldots \omega_n] \times \Sigma_I^+$, and for any $1 \le i \le n$, we have $\tilde \Phi^i(B_n((\omega, \eta), \vp)) = [\omega_{i+1} \ldots \omega_n] \times [\omega_i \ldots \omega_1]$. From the $\tilde \Phi$-invariance of the equilibrium measure $\tilde \mu_{\tilde \psi}$, it follows that for any $1 \le i \le n$, 
\begin{equation}\label{i}
\tilde \mu_{\tilde \psi}(\tilde \Phi^i(B_n((\omega, \eta), \vp))) = \tilde \mu_{\tilde \psi}([\omega_1 \ldots \omega_n] \times \Sigma_I^+) = \tilde \mu_{\tilde \psi}([\omega_{i+1} \ldots \omega_n] \times [\omega_i \ldots \omega_1])
\end{equation}
However recall that $\pi_{1*}\hat \mu_\psi = \bar \mu_\phi = \nu_{\mathbf p}$, and thus $(\pi_1 \circ \tilde \pi)_* \tilde \mu_{\tilde \psi} = \nu_{\mathbf p}$. Therefore using also (\ref{i}) we obtain that,  for any $j \ge 1$ and any $\omega, \eta \in \Sigma_I^+$, 
\begin{equation}\label{produs1}
\tilde\mu_{\tilde \psi}([\omega_1] \times [\eta_1 \ldots \eta_j]) = \nu_{\mathbf p}([\eta_j \ldots \eta_1\omega_1]) = p_{\eta_j} \cdot \ldots \cdot p_{\eta_1} p_{\omega_1}
\end{equation}
By adding over $\omega_1 \in \Sigma_I^+$ we obtain that, for any $j \ge 1$ and for any $\eta = (\eta_1 \eta_2 \ldots) \in \Sigma_I^+$, 
$$\tilde \mu_{\tilde \psi}(\Sigma_I^+ \times [\eta_1 \ldots \eta_j]) = p_{\eta_1} \ldots p_{\eta_j} = \nu_{\mathbf p}([\eta_1 \ldots \eta_j]$$
But this works for any cylinder in $\Sigma_I^+$. Also, for any Borel set $A \subset \Lambda$, we have $\pi_{*}\nu_{\mathbf p}(A) = \nu_{\mathbf p}(\pi^{-1}(A))$.  Hence from the above, and by using also  (\ref{mu2}),  we can infer  that $\pi_{2*}\hat \mu_\psi$ is in fact a self-conformal measure on $\Lambda$, namely,
$$
\pi_{2*}\hat\mu_\psi = \pi_* \nu_{\mathbf p}
$$

We now prove the second equality. From before, $\tilde \Phi: \Sigma_I^+ \times \Sigma_I^+ \to \Sigma_I^+ \times \Sigma_I^+$ is a homeomorphism which preserves $\tilde \mu_{\tilde \psi}$. Also notice that for any $\omega_1, \omega_2, \eta_1, \ldots, \eta_m \in I$, one has $\tilde \Phi([\omega_1\omega_2] \times [\eta_1\ldots \eta_m]) = [\omega_2] \times [\omega_1\eta_1\eta_2\ldots \eta_m]$. 
But, from (\ref{produs1}), $\tilde \mu_{\tilde \psi}([\omega_2]\times[\omega_1\eta_1\ldots\eta_m]) = p_{\omega_2}p_{\omega_1}p_{\eta_1}\ldots p_{\eta_m}$, and from the $\tilde\Phi$-invariance of $\tilde\mu_{\tilde\psi}$,  it follows that $ \tilde\mu_{\tilde\psi}([\omega_1\omega_2] \times [\eta_1\ldots \eta_m]) =  \tilde\mu_{\tilde\psi}(\tilde\Phi([\omega_1\omega_2] \times [\eta_1\ldots \eta_m])) = p_{\omega_1}p_{\omega_2}p_{\eta_1}\ldots p_{\eta_m}$. Hence by induction it follows similarly that, for any $k, m \ge 1$,  $$\tilde\mu_{\tilde\psi}([\omega_1\ldots \omega_k]\times[\eta_1\ldots \eta_m]) = p_{\omega_1}\ldots p_{\omega_k} \cdot p_{\eta_1}\ldots p_{\eta_m}$$
This means that $\tilde\mu_{\tilde\psi} = \nu_{\textbf p} \times \nu_{\textbf p}$, \ and that $\pi_* \nu_{\textbf p} = (\pi_2 \circ \tilde\pi)_*(\nu_{\textbf p} \times \nu_{\textbf p})$. 

\end{proof}

The equality of the projection measures for Bernoulli probabilities has useful consequences when computing the associated overlap numbers, see Corollary \ref{usorcalc}.

\

For any conformal  iterated function system $\cl S$, we want to prove now that the exponential rate of growth in $n$, of the number of generic $n$-chains/roots from $\Delta_n$, is approaching the folding entropy of the measure $\hat \mu_\psi$. In particular it follows that, on average, the number of $n$-chains associated to the $n$-overlaps of $\Lambda$ grows exponentially like $e^{nF_\Phi(\hat\mu_0)}$.  

\begin{thm}\label{fold}
Let a finite conformal IFS $\cl S = \{\phi_i, i \in I\}$ with limit set $\Lambda$, and a H\"older continuous potential $\psi$ on the lift space $\Sigma_I^+ \times \Lambda$; denote the equilibrium measure of $\psi$ on $\Sigma_I^+\times \Lambda$ by $\hat \mu_\psi$. Then, $$\mathop{\lim}\limits_{\tau \to 0}\mathop{\lim}\limits_{n \to \infty} \frac 1n \int_{\Sigma_I^+\times\Lambda} \log b_n((\omega, x), \tau, \hat \mu_\psi) \ d\hat\mu_\psi(\omega, x) = F_\Phi(\hat\mu_\psi)$$
\end{thm}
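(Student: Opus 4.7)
The plan is to identify $b_n((\omega,x),\tau,\hat\mu_\psi)$ with the cardinality of the $\tau$-generic fiber of $\Phi^n$ over $\Phi^n(\omega,x)$, then to estimate this via Parry's Jacobian identity applied to the Gibbs state $\hat\mu_\psi$. Since $\Phi$ is expansive on $\Sigma_I^+\times\Lambda$ and $\psi$ is H\"older continuous, Bowen-Ruelle theory yields the Gibbs property $\hat\mu_\psi(B_n((\eta,y),\vp))\asymp e^{S_n\psi(\eta,y)-nP(\psi)}$, and hence $\log J_\Phi(\hat\mu_\psi)=P(\psi)-\psi$ up to a bounded coboundary. Parry's formula then gives
\[
F_\Phi(\hat\mu_\psi)=\int \log J_\Phi(\hat\mu_\psi)\,d\hat\mu_\psi \;=\; P(\psi)-\int\psi\,d\hat\mu_\psi,
\]
the chain rule for Jacobians yields $\log J_{\Phi^n}(\hat\mu_\psi)(\eta,y)=nP(\psi)-S_n\psi(\eta,y)+O(1)$, and Parry's identity reads $\sum_{(\eta,y)\in\Phi^{-n}(z)}1/J_{\Phi^n}(\hat\mu_\psi)(\eta,y)=1$ for $\hat\mu_\psi$-a.e.\ $z$.

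For the upper bound, for any $(\eta,y)\in\Delta_n((\omega,x),\tau,\hat\mu_\psi)$ the definition (\ref{gen}) gives $S_n\psi(\eta,y)=n\int\psi\,d\hat\mu_\psi+O(n\tau)$, so $1/J_{\Phi^n}(\hat\mu_\psi)(\eta,y)=e^{-nF_\Phi(\hat\mu_\psi)+O(n\tau)}$. Restricting Parry's identity to the $\tau$-generic preimages yields
\[
b_n((\omega,x),\tau,\hat\mu_\psi)\cdot e^{-nF_\Phi(\hat\mu_\psi)-O(n\tau)} \;\le\; 1,
\]
so $\tfrac1n\log b_n\le F_\Phi(\hat\mu_\psi)+O(\tau)$ pointwise; integrating and taking first $n\to\infty$ and then $\tau\to 0$ gives the ``$\le$'' direction.

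For the lower bound, set $E_n=\{(\eta,y):|S_n\psi(\eta,y)/n-\int\psi\,d\hat\mu_\psi|<\tau\}$ and $\alpha_n(z)=\sum_{(\eta,y)\in E_n\cap\Phi^{-n}(z)}1/J_{\Phi^n}(\hat\mu_\psi)(\eta,y)$. Birkhoff's theorem gives $\hat\mu_\psi(E_n)\to 1$, and $\Phi^n$-invariance of $\hat\mu_\psi$ yields $\int\alpha_n\,d\hat\mu_\psi=\hat\mu_\psi(E_n)$, so $\alpha_n\to 1$ in $L^1(\hat\mu_\psi)$. The Gibbs bound on $E_n$ also gives $\alpha_n(z)\le b_n\cdot e^{-nF_\Phi(\hat\mu_\psi)+O(n\tau)}$, and after taking logarithms,
\[
\tfrac1n\log b_n \;\ge\; F_\Phi(\hat\mu_\psi)-O(\tau)+\tfrac1n\log\alpha_n(z),
\]
which reduces the proof to showing $\tfrac1n\int\log\alpha_n\,d\hat\mu_\psi\to 0$. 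I would split the domain at $\{\alpha_n\ge 1/2\}$ (on which $|\log\alpha_n|\le\log 2$) and $\{\alpha_n<1/2\}$ (of $\hat\mu_\psi$-measure at most $2(1-\hat\mu_\psi(E_n))\to 0$ by Chebyshev), and on the latter use the crude bound $\alpha_n\ge 1/J_{\Phi^n}(\hat\mu_\psi)(y_0)\ge e^{-n(F_\Phi(\hat\mu_\psi)+O(\tau))-O(1)}$ for any generic $y_0$, so that $|\log\alpha_n|=O(n)$ and the contribution is $o(1)$ after division by $n$.

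The main obstacle I anticipate is precisely this last step: controlling $\log\alpha_n$ on the (asymptotically negligible) set where $\alpha_n$ is very small or vanishes, i.e.\ on fibers whose $n$-preimages all avoid $E_n$. The cleanest way to ensure $b_n\ge 1$ and $\alpha_n>0$ pointwise for $\hat\mu_\psi$-a.e.\ $(\omega,x)$ and large $n$ is to invoke Birkhoff convergence for $(\omega,x)$ itself, so that the identity chain $(\omega_1,\ldots,\omega_n)$ eventually lies in $\Delta_n((\omega,x),\tau,\hat\mu_\psi)$. A secondary technical issue is justifying the Gibbs/Jacobian formula rigorously for $\Phi$, which is expansive but contracting in the $\Lambda$-coordinate rather than globally expanding; this relies on the specification property for $\Phi$ (inherited from the shift $\sigma$ in the first coordinate) and the H\"older regularity of $\psi$, so that standard Bowen-Ruelle theory transfers to this skew-product setting.
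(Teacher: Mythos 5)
The key identity you build everything on --- $\log J_\Phi(\hat\mu_\psi)=P(\psi)-\psi$ up to a bounded coboundary, hence $F_\Phi(\hat\mu_\psi)=P(\psi)-\int\psi\,d\hat\mu_\psi=h_{\hat\mu_\psi}(\Phi)$ --- is false for this map, and the proof does not survive without it. That formula is the Jacobian of a genuinely expanding map, where $\Phi(B_{n+1}(z,\varepsilon))=B_n(\Phi z,\varepsilon)$ so the Gibbs property can be applied to both a Bowen ball and its image. Here $\Phi(\omega,x)=(\sigma\omega,\phi_{\omega_1}(x))$ contracts in the second coordinate and is non-constant-to-one: the image of a small ball $[\omega_1\ldots\omega_m]\times B(x,r)$ is $[\omega_2\ldots\omega_m]\times\phi_{\omega_1}(B(x,r))$, a thin slice of a Bowen ball of $\Phi(\omega,x)$ whose $\hat\mu_\psi$-measure aggregates the contributions of \emph{all} overlapping inverse branches. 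The correct statement (the paper's (\ref{jaco})) is $J_{\Phi^n}(\hat\mu_\psi)(\omega,x)\asymp\sum_{\Phi^n(\eta,y)=\Phi^n(\omega,x)}e^{S_n\psi(\eta,y)}\big/e^{S_n\psi(\omega,x)}$, which equals $e^{nP(\psi)-S_n\psi(\omega,x)}$ only if every point admits all $|I|^n$ possible $n$-chains; indeed your two claims, Parry's identity $\sum 1/J_{\Phi^n}=1$ and $\log J_{\Phi^n}=nP-S_n\psi+O(1)$, are mutually inconsistent otherwise. Sanity checks: under the strong separation condition $\Phi$ is essentially invertible, so $F_\Phi(\hat\mu_\psi)=0$ and $b_n\le 1$, while $P(\psi)-\int\psi\,d\hat\mu_\psi=h_{\hat\mu_\psi}(\Phi)>0$; and for Bernoulli convolutions your identity would force $o(\cl S_\lambda)=2$ for all $\lambda$, contradicting Corollary \ref{2}. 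Concretely, your lower bound collapses because $\alpha_n$ mixes the two: the transfer-operator identity $\int\alpha_n\,d\hat\mu_\psi=\hat\mu_\psi(E_n)$ holds only for the true Jacobian, while the bound $\alpha_n\le b_n e^{-nF_\Phi+O(n\tau)}$ uses the false explicit formula.

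That said, the architecture of your argument is the right one and, once corrected, is essentially the paper's. With (\ref{jaco}) in hand, the quantity to study is $\frac1n\log J_{\Phi^n}(\hat\mu_\psi)(\omega,x)=\frac1n\log\sum_{\Phi^n(\eta,y)=\Phi^n(\omega,x)}e^{S_n\psi(\eta,y)}-\frac1n S_n\psi(\omega,x)$, whose integral is $F_\Phi(\hat\mu_\psi)$; the $\tau$-generic preimages contribute $b_n\,e^{n(\int\psi\,d\hat\mu_\psi\pm O(\tau))}$ to the sum, and the non-generic ones are discarded via Birkhoff exactly as you propose. Your two anticipated obstacles are real but secondary: the paper controls the non-generic remainder with a partition $L^n_{ij}$ into injectivity domains and the elementary bound $\log(1+x)\le x$, and justifies the Gibbs/Jacobian estimates for the skew product via the comparison inequality (\ref{comp}) imported from \cite{M-ETDS11} rather than by a direct appeal to expanding-map theory.
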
 

\begin{proof}
In our case the map $\Phi: \Sigma_I^+ \times \Lambda \to \Sigma_I^+\times \Lambda$ is distance-expanding in the first coordinate, and distance contracting in the second coordinate. 
Let $B_m(z, \vp)$ denote the $(m, \vp)$-Bowen ball around $z$ in the canonical product metric on the compact metric space $\Sigma_I^+\times \Lambda$ with respect to the endomorphism $\Phi$; hence in particular it is expansive. 
Since $\hat \mu_\psi$ is the equilibrium measure of a H\"older continuous potential on $\Sigma_I^+\times \Lambda$, we can apply the properties of equilibrium measures with respect to expansive maps on compact metric spaces (see \cite{KH}). \ We will use first the ideas of Theorem 1 from \cite{M-ETDS11}, giving the comparison between the (equilibrium) measure of various parts of the preimage set. So, from \cite{M-ETDS11} there exists a constant $C>0$ such that, for any positive integer $m$ and for any sets $A_1, A_2$ satisfying $A_1 \subset B_m(z_1, \vp), A_2 \subset B_m(z_2, \vp)$ and $\Phi^m(A_1) = \Phi^m(A_2)$, we have:
\begin{equation}\label{comp}
\frac{1}{C}\frac{\hat\mu_\psi(A_2)}{e^{S_m\psi(z_2)}} \le \frac{\hat\mu_\psi(A_1)}{e^{S_m\psi(z_1)}} \le C \frac{\hat\mu_\psi(A_2)}{e^{S_m\psi(z_2)}}
\end{equation}

Now the Jacobian of the measure $\hat \mu_\psi$ with respect to $\Phi^n$ gives the change in the measure of a set by applying the map $\Phi^n$ (see \cite{Pa}); hence for any integer $n \ge 1$,  $\hat\mu_\psi(\Phi^n(\cl A)) = \int_{\cl A}J_{\Phi^n}(\hat \mu_\psi) d\hat \mu_\psi$, for any measurable set $\cl A \subset \Sigma_I^+\times \Lambda$, on which $\Phi^n$ is injective. But in fact, $J_{\Phi^n}(\hat\mu_\psi)(\omega, x) = \mathop{\lim}\limits_{r \to 0} \frac{\hat\mu_\psi(\Phi^n(B((\omega, x), r)}{\hat\mu_\psi(B((\omega, x), r)}$, for $\hat\mu_\psi$-a.e $(\omega, x) \in \Sigma_I^+\times \Lambda$. However from the $\Phi$-invariance of the measure $\hat \mu_\psi$ it follows that $\hat\mu_\psi(\Phi^{n}(\cl A)) = \hat\mu_\psi(\Phi^{-n}(\Phi^n(\cl A)))$, for any Borel set $\cl A$. Hence we can apply the above comparison between the various parts of the preimage set $\Phi^{-n}(\Phi^n(\cl A))$ for $n$ arbitrary (i.e in fact the comparison between various sets taken by different compositions  $\phi_{j_1}\circ\ldots\circ\phi_{j_n}$ to the same image),  in order to obtain that there exists a constant $C>0$ independent of $n$ such that:
 
\begin{equation}\label{jaco}
\ \frac{\mathop{\sum}\limits_{(\eta, y), \Phi^n(\eta, y) = \Phi^n(\omega, x)} \exp(S_n\psi(\eta, y))}{C\cdot \exp(S_n\psi(\omega, x))} \le J_{\Phi^n}(\hat \mu_\psi)(\omega, x) \le C \cdot \frac{\mathop{\sum}\limits_{(\eta, y), \Phi^n(\eta, y) = \Phi^m(\omega, x)} \exp(S_n\psi(\eta, y))}{\exp(S_n\psi(\omega, x))},
\end{equation}
for $\hat\mu_\psi$-a.e pair $(\omega, x) \in \Sigma_I^+\times \Lambda$. 
Now, as the probability $\hat \mu_\psi$ is $\Phi$-invariant on the product space $\Sigma_I^+ \times \Lambda$, it follows from (\ref{jaco}) and from the properties of the folding entropy that 
\begin{equation}\label{fe}
\begin{aligned}
F_\Phi(\hat \mu_\psi) = &\frac{1}{n} \int_{\Sigma_I^+\times \Lambda}\log J_{\Phi^n}(\hat\mu_\psi)(\omega, x) d\hat \mu_\psi(\omega, x) = \\
&=\mathop{\lim}\limits_{n \to \infty} \frac 1n\int_{\Sigma_I^+\times \Lambda} \log \frac{\mathop{\sum}\limits_{\Phi^n(\eta, y) = \Phi^n(\omega, x)} \exp(S_n\psi(\eta, y))}{\exp(S_n\psi(\omega, x))} d\hat \mu_\psi(\omega, x)
\end{aligned}
\end{equation}

From Birkhoff Ergodic Theorem we know that, \  $\hat \mu_\psi((\omega, x) \in \Sigma_I^+\times \Lambda, |\frac{S_n\psi(\omega, x)}{n}-\int_{\Sigma_I^+\times \Lambda} \psi d\hat \mu_\psi| > \tau/2) \mathop{\to}\limits_{n\to \infty} 0$. Then, for any positive small number $\xi$, there exists an integer $n = n(\xi) \ge 1$ so that for all integers $n \ge n(\xi)$, we have
\begin{equation}\label{birk}
\mu_\psi((\omega, x) \in \Sigma_I^+\times \Lambda, |\frac{S_n\psi(\omega, x)}{n}-\int_{\Sigma_I^+\times \Lambda} \psi d\hat \mu_\psi| > \tau/2) < \xi
\end{equation}
Recall that, if $(\eta_1, \ldots, \eta_n) \in \Delta_n((\omega, x), \tau, \hat\mu_\psi)$, then the $n$-chain $(\eta_n, \ldots, \eta_1)$ uniquely determines an $n$-root $y$ of $\phi_{\omega_n\ldots\omega_1}(x)$. Hence with $\eta_{n+i} = \omega_{n+i}, i \ge 1$,  we can consider also the finite set $$\Delta_n'((\omega, x), \tau, \hat\mu_\psi) = \{(\eta, y) \in \Sigma_I^+ \times \Lambda, \  \Phi^n(\eta, y) = \Phi^n(\omega, x), \ |\frac{S_n\psi(\eta, y)}{n} - \int\psi \ d\hat\mu_\psi| < \tau\},$$
and there exists a bijection between $\Delta_n((\omega, x), \tau, \hat\mu_\psi)$ and $\Delta_n'((\omega, x), \tau, \hat\mu_\psi)$, taking $(\eta_1, \ldots, \eta_n)$ to $((\eta_1, \ldots, \eta_n, \omega_{n+1}, \omega_{n+2}, \ldots), y)$.  Thus $b_n((\omega, x), \tau, \hat\mu_\psi) = \text{Card}\Delta_n'((\omega, x), \tau, \hat\mu_\psi)$.
We now define the following set of $n$-roots, $$\Gamma_n((\omega, x), \tau, \hat \mu_\psi):=\{(\eta, y) \in \Sigma_I^+\times \Lambda, \Phi^n(\eta, y) = \Phi^n(\omega, x), (\eta_1, \ldots, \eta_n) \notin \Delta_n((\omega, x), \tau, \hat \mu_\psi)\}$$  Denote the sum corresponding to the roots from $\Gamma_n((\omega, x), \tau, \hat\mu_\psi)$ by $$\vartheta_n((\omega, x), \tau, \hat\mu_\psi):= \mathop{\sum}\limits_{(\eta, y) \in \Gamma_n((\omega, x), \tau, \hat\mu_\psi)} \exp(S_n\psi(\eta, y))$$
Let us now see what a typical Bowen ball for the map $\Phi: \Sigma_I^+\times \Lambda \to \Sigma_I^+\times \Lambda$ looks like. If $d(\cdot, \cdot)$ denotes the product metric, and if $d(\Phi^i(\omega, x), \Phi^i(\eta, y)) < \vp, 0 \le i \le n-1$, then there exists an integer $N(\vp)$ so that $\omega_i=\eta_i, i= 1, \ldots, n+N(\vp)$, and $d(x, y)<\vp$, since the maps $\phi_j$ are all contractions.
For an arbitrary $n \ge 2$, we now consider a measurable partition of $\Sigma_I^+\times\Lambda$ modulo $\hat\mu_\psi$, into sets $L_i^n, 1 \le i \le p_n$, such that for any $1 \le i \le p_n$ there exists a point $\zeta_i \in L_i^n$ so that for any point $\zeta_{ij} \in \Phi^{-n}(\zeta_i), 1 \le j \le p_{i, n}$, we have $L_i^n \subset \Phi^n(B_n(\zeta_{ij}, \vp))$. The integer $p_{i, n} \ge 1$ depends on $i$ for $1 \le i \le p_n$, and it is given by the number of $n$-roots of $\zeta_i$ in $\Lambda$, with respect to $\cl S$. This is possible to do if we take the sets $L_i^n$ small enough. Then, let us denote by $L_{ij}^n:= \Phi^{-n}(L_i^n)\cap B_n(\zeta_{ij}, \vp)$, for $1 \le i \le p_n, 1\le j \le p_{i, n}$.  \ 
Notice that  if $\Phi(\eta, y) = \Phi(\eta', y') = (\omega, x) \in \Sigma_I^+\times \Lambda$, then $\sigma\eta=\sigma\eta'=\omega$, i.e $\eta_2 =\omega_2, \ldots$, and $\phi_{\eta_1}(y) = \phi_{\eta_1'}(y')=x$. If $\eta_1 \neq \eta_1'$, then $d((\eta, y), (\eta', y')) \ge d(\eta_1, \eta_1') >\vp_0>\vp$, for some $\vp_0>0$. If $\eta_1 = \eta_1'$, then $\phi_{\eta_1}(y) = \phi_{\eta_1'}(y')$; but $\phi_\eta, \eta \in I$ are injective and thus $y = y'$. This implies that the sets $L_{ij}^n$ are mutually disjoint in $i,j$. We now decompose the integral of the logarithm of the Jacobian of $\hat \mu_\psi$ with respect to $\Phi^n$, along this partition with the sets $L_{ij}^n, 1 \le i \le p_n, 1 \le j \le p_{i, n}$. 
Therefore, for an arbitrary $n \ge 2$, we have:
\begin{equation}\label{decomp}
\int_{\Sigma_I^+\times\Lambda}\log\frac{\mathop{\sum}\limits_{\Phi^n(\eta, y) = \Phi^n(\omega, x)} \exp(S_n\psi(\eta, y))}{\exp(S_n\psi(\omega, x))} d\hat\mu_\psi (\omega, x)= \mathop{\mathop{\sum}\limits_{1\le i\le p_n}}\limits_{ 1\le j\le p_{i, n}} \int_{L_{ij}^n} \log\frac{\mathop{\sum}\limits_{\Phi^n(\eta, y) = \Phi^n(\omega, x)} \exp(S_n\psi(\eta, y))}{\exp(S_n\psi(\omega, x))} d\hat\mu_\psi(\omega, x) 
\end{equation}
Now, in regards to formula (\ref{jaco}),  we can write in general $$\mathop{\sum}\limits_{(\eta, y) \in \Phi^{-n}\Phi^n(\omega, x)} e^{S_n\psi(\eta, y)} = \mathop{\sum}\limits_{(\eta_1, \ldots, \eta_n) \in \Delta_n((\omega, x), \tau, \hat\mu_\psi)} e^{S_n\psi(\eta, y)} + \vartheta_n((\omega, x), \tau, \hat \mu_\psi)$$
Denote also $\rho_n(i, \tau, \hat\mu_\psi):= \mathop{\sum}\limits_{j, \zeta_{ij} \notin \Delta_n'(\zeta_{i1}, \tau, \hat\mu_\psi)} \hat\mu_\psi(L_{ij}^n)$. 
Thus by using (\ref{comp}), the definition of $\Delta_n'((\omega, x), \tau, \hat \mu_\psi)$ and the fact that $b_n((\omega, x), \tau, \hat\mu_\psi) = \text{Card}(\Delta_n'((\omega, x), \tau, \hat\mu_\psi))$, we obtain that the above sum in (\ref{decomp}) is comparable to the sum: $$\mathop{\sum}\limits_{i, j} \hat\mu_\psi(L_{ij}^n) \log\frac{b_n(\zeta_{ij}, \tau, \hat\mu_\psi) \hat\mu_\psi(L_{ij}^n) +\rho_n(i, \tau, \hat\mu_\psi)}{\hat\mu_\psi(L_{ij}^n)},$$ where we recall that the comparability constant $C>0$ does not depend on $n$, nor on $L_{i j}^n$.  
Now in general, if $(\eta, y)\in \Delta_n'((\omega, x), \tau, \hat\mu_\psi)$, and if $0 < \vp < \tau$ and $(\eta, y) \in B_n(\zeta_{ij}, \vp)$, then since the potential $\psi$ is H\"older continuous, it follows that $$\Big|\frac{S_n\psi(\eta, y)}{n} - \frac{S_n\psi(\zeta_{ij})}{n}\Big| \le v(\tau),$$ for some small $v(\tau)>0$ where $\mathop{\lim}\limits_{\tau \to 0}v(\tau) = 0$. Also, if $K:= \sup_{\Sigma_I^+\times \Lambda}|\psi|$, then $e^{S_n\psi(\eta, y)} \le e^{n K}$. Notice in addition, that the set $\Phi^{-n}\Phi^n(\omega, x)$ has at most $|I|^n$ elements in $\Sigma_I^+ \times \Lambda$. Denote the set of indices $j$ corresponding to nongeneric roots by $Q(n, i, \tau, \hat\mu_\psi) := \{j, 1 \le j \le p_{i, n}, \ \zeta_{ij} \in \Gamma_n(\zeta_{i1}, \tau, \hat\mu_\psi)\}$.  Then if $j \in Q(n, i, \tau, \hat\mu_\psi)$, then $\frac 1n |S_n\psi(\zeta_{ij})- \int_{\Sigma_I^+\times \Lambda} \psi d\hat \mu_\psi| > \tau$. Hence we can use the measure estimate in (\ref{birk}) to obtain that:  $$\mathop{\sum}\limits_{1 \le i \le p_n, \ j \in Q(n, i, \tau, \hat\mu_\psi)} \frac 1n \int_{L_{ij}^n} \log\frac{\mathop{\sum}\limits_{(\eta, y) \in \Phi^{-n}\Phi^n(\omega, x)} \exp(S_n\psi(\eta, y))}{\exp(S_n\psi(\omega, x))} d\hat\mu_\psi(\omega, x) \le \frac 1n \xi \log(2K|I|^n)$$ 
Therefore, from the comparison in (\ref{comp}) and from the above discussion, it follows that there exists a positive constant $C$, independent of $n$, of the partition $\{L_i^n\}_{1\le i\le p_n}$ and of the points $\zeta_i\in L_i^n$,  such that:
\begin{equation}\label{v}
\begin{aligned}
&\frac{1}{n}\mathop{\mathop{\sum}\limits_{1\le i\le p_n}}\limits_{ j \notin Q(n, i, \tau, \hat\mu_\psi)} \hat\mu_\psi(L_{ij}^n) \log b_n(\zeta_{i1}, \tau, \hat\mu_\psi) + \frac{1}{n}  \mathop{\sum}\limits_{i, j\notin Q(n, i, \tau, \hat\mu_\psi)} \hat\mu_\psi(L_{ij}^n)\log(1+\frac{\rho_n(i, \tau, \hat\mu_\psi)}{b_n(\zeta_{i1}, \tau, \hat\mu_\psi)\hat\mu_\psi(L_{ij}^n)}) - v(\tau)-C\xi  \\
&\le \int_{\Sigma_I^+\times \Lambda} \frac 1n \log\frac{\mathop{\sum}\limits_{(\eta, y) \in \Phi^{-n}\Phi^n(\omega, x)} \exp(S_n\psi(\eta, y))}{\exp(S_n\psi(\omega, x))} \ d\hat\mu_\psi(\omega, x) \le \\
&\le \frac{1}{n}\mathop{\mathop{\sum}\limits_{1\le i\le p_n}}\limits_{ j \notin Q(n, i, \tau, \hat\mu_\psi)} \hat\mu_\psi(L_{ij}^n) \log b_n(\zeta_{i1}, \tau, \hat\mu_\psi) + \frac{1}{n}  \mathop{\sum}\limits_{i, j\notin Q(n, i, \tau, \hat\mu_\psi)} \hat\mu_\psi(L_{ij}^n)\log(1+\frac{\rho_n(i, \tau, \hat\mu_\psi)}{b_n(\zeta_{i1}, \tau, \hat\mu_\psi)\hat\mu_\psi(L_{ij}^n)}) +v(\tau) +C\xi,
\end{aligned}
\end{equation}
where we recall that $\xi$ is the bound on the measure of non-generic points in (\ref{birk}).
But in general,  $\log(1+x) \le x$ for any $x >0$, hence $\log(1+\frac{\rho_n(i, \tau, \hat\mu_\psi)}{b_n(\zeta_{i1}, \tau, \hat\mu_\psi)\hat\mu_\psi(L_{ij}^n)}) \le 
\frac{\rho_n(i, \tau, \hat\mu_\psi)}{b_n(\zeta_{i1}, \tau, \hat\mu_\psi)\hat\mu_\psi(L_{ij}^n)}$. Therefore from (\ref{birk}), the second sum in the right-hand term of (\ref{v}) is less than $\xi$, which implies that: $$
\begin{aligned}
\Big|\frac 1n \int_{\Sigma_I^+\times \Lambda} & \frac 1n \log\frac{\mathop{\sum}\limits_{(\eta, y) \in \Phi^{-n}\Phi^n(\omega, x)}  \exp(S_n\psi(\eta, y))}{\exp(S_n\psi(\omega, x)} d\hat\mu_\psi(\omega, x) - \frac 1n \int_{\Sigma_I^+\times \Lambda} \log b_n((\omega, x), \tau, \hat\mu_\psi) d\hat\mu_\psi(\omega, x)\Big| \\
&\le v(\tau) +C\xi
\end{aligned}
$$
Therefore, using the expression for the folding entropy $F_\Phi(\hat\mu_\psi)$ from (\ref{fe}), and the fact that $\xi$ converges to 0 when $\tau$ converge to 0 (and also that $v(\tau)$ converges to 0 at the same time), we obtain the conclusion of the Theorem.

\end{proof}

We now want to define a notion of overlap number of $\cl S$ associated to an equilibrium state $\hat\mu_\psi$. This notion will take into consideration the $\hat\mu_\psi$-generic $n$-roots in $\Lambda$ and all the corresponding $n$-chains starting from them, for $n$ large. In particular, we obtain a (topological) overlap number of the system $\cl S$, which gives the average rate of growth of the number of $n$-chains from $n$-roots to points in $\Lambda$. 

\begin{cor}\label{maxent}
If $\cl S = \{\phi_i, i \in I\}$ is an arbitrary finite conformal iterated function system with overlaps and $\Lambda$ is its limit set, and if $\psi$ is a H\"older continuous potential on $\Sigma_I^+ \times \Lambda$ with equilibrium measure $\hat \mu_\psi$, we  call the \textbf{overlap number of $\cl S$ with respect to $\hat\mu_\psi$}, 
\begin{equation}\label{onmu}
o(\cl S, \hat \mu_\psi):= \exp\big(\mathop{\lim}\limits_{\tau \to 0}\mathop{\lim}\limits_{n\to \infty}
\frac 1n \int_{\Sigma_I^+\times \Lambda} \log b_n((\omega, x), \tau, \hat\mu_\psi) \ d\hat\mu_\psi(\omega, x)\big)
\end{equation}
If $\hat\mu_0$ is the measure of maximal entropy for $\Phi$ on $\Sigma_I^+\times\Lambda$, then the (topological) \textbf{overlap number of $\cl S$} is given by:
 $$
\begin{aligned}
o(\cl S) := o(\cl S, \hat\mu_0) &= \exp\big(\mathop{\lim}\limits_{n \to \infty} \frac 1n \int_{\Sigma_I^+\times \Lambda} \log b_n(\omega, x) \ d\hat\mu_0(\omega, x)\big) = \exp\big(F_\Phi(\hat\mu_0)\big) =\\ &=\exp\big(\int_{\Sigma_I^+\times\Lambda} \log \mathop{\lim}\limits_{n\to\infty} \frac{\hat\mu_0([\omega_2, \ldots, \omega_n] \times \phi_{\omega_1}(B(x, \frac{1}{2^n}))}{\hat\mu_0([\omega_1, \ldots, \omega_n])\times B(x, \frac{1}{2^n}))} \ d\hat\mu_0(\omega, x)\big)
\end{aligned}
$$
\end{cor}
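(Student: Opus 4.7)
The plan is to derive the two equalities in Corollary \ref{maxent} directly from Theorem \ref{fold} combined with Parry's Jacobian formula, with only a small observation needed to eliminate the $\tau$-limit in the maximal entropy case. First, since $o(\cl S, \hat\mu_\psi)$ is by definition the exponential of the double limit from Theorem \ref{fold}, exponentiating both sides of that theorem gives $o(\cl S, \hat\mu_\psi) = \exp(F_\Phi(\hat\mu_\psi))$ immediately. To specialize to $o(\cl S) = o(\cl S, \hat\mu_0)$, I would take $\psi \equiv 0$, a H\"older potential whose equilibrium state is the measure of maximal entropy. Then $S_n\psi \equiv 0$ and $\int\psi\, d\hat\mu_0 = 0$, so the genericity condition $|S_n\psi(\eta, y)/n - \int\psi\, d\hat\mu_0| < \tau$ in (\ref{gen}) is trivially satisfied by every $n$-chain that produces a legitimate $n$-root. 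Consequently $b_n((\omega, x), \tau, \hat\mu_0)$ reduces to the unadorned count $b_n(\omega, x)$ of all $n$-chains ending at $\phi_{\omega_n\ldots\omega_1}(x)$, independently of $\tau$, so that the outer $\lim_{\tau\to 0}$ in (\ref{onmu}) can be dropped.

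For the final display identifying $o(\cl S)$ with the integral of a local Radon-Nikodym-type limit, I would invoke the Parry formula cited in the paper just before Theorem \ref{fold}, namely $F_\Phi(\hat\mu_0) = \int \log J_\Phi(\hat\mu_0)\, d\hat\mu_0$. The Jacobian of the $\Phi$-invariant probability $\hat\mu_0$ under the essentially countable-to-one map $\Phi$ exists $\hat\mu_0$-a.e. and coincides with the local scaling factor $\lim_{r\to 0} \hat\mu_0(\Phi(B((\omega,x), r)))/\hat\mu_0(B((\omega,x), r))$. In the canonical product metric on $\Sigma_I^+\times\Lambda$ one has $B((\omega,x), 2^{-n}) = [\omega_1,\ldots,\omega_n]\times B(x, 2^{-n})$, and because $\phi_{\omega_1}$ is injective (and the shift is injective on a fixed cylinder), $\Phi$ is injective on this ball with image $[\omega_2,\ldots,\omega_n]\times \phi_{\omega_1}(B(x, 2^{-n}))$. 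Inserting $r = 2^{-n}$ into the Jacobian limit and then into the integral yields the displayed quotient expression in the statement.

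The only real subtlety is the $\hat\mu_0$-a.e.\ existence of the limit defining $J_\Phi(\hat\mu_0)$ together with integrability of $\log J_\Phi(\hat\mu_0)$, so that Parry's identity is applicable. This is, however, standard in the present setting: $\Phi$ is expansive and at most $|I|$-to-one, and the bounded distortion estimate (\ref{comp}) already established in the proof of Theorem \ref{fold} provides uniform comparability between the measures of different branches of $\Phi^{-n}$, which in turn keeps $J_\Phi(\hat\mu_0)$ bounded away from $0$ and $\infty$ on a set of full $\hat\mu_0$-measure. I do not expect this to be a genuine obstacle, so the corollary is essentially an unpacking of Theorem \ref{fold} together with the elementary observation that for $\psi \equiv 0$ the $\tau$-filter disappears.
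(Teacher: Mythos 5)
Your proposal is correct and follows essentially the same route the paper intends: the first two equalities are exactly Theorem \ref{fold} exponentiated, the disappearance of the $\tau$-limit for $\hat\mu_0$ is the paper's own remark (made just before Definition of the projections) that for the measure of maximal entropy all $n$-roots are generic so $b_n$ is independent of $\tau$, and the final display is the Parry identity $F_\Phi(\hat\mu_0)=\int\log J_\Phi(\hat\mu_0)\,d\hat\mu_0$ with the Jacobian written as the local ratio on the balls $B((\omega,x),2^{-n})=[\omega_1,\ldots,\omega_n]\times B(x,2^{-n})$, which the paper also records explicitly. No gaps; the corollary is, as you say, an unpacking of Theorem \ref{fold} plus these two observations.
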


\

\

In the case of \textbf{projections of Bernoulli measures}, we can use now Theorem \ref{equal} to compute more easily the overlap numbers.
Let us take an arbitrary probability vector $\textbf p = (p_1, \ldots, p_{|I|})$, which gives a Bernoulli measure $\nu_{\bf p}$ on $\Sigma_I^+$. 
According to the discussion before Theorem \ref{equal}, there exists an equilibrium measure denoted $\hat \mu_{\bf p}$ of the potential $\psi((\omega_1, \ldots), x)= \log p_{\omega_1}, \ (\omega, x) \in \Sigma_I^+\times \Lambda$, with respect to $\Phi$ on $\Sigma_I^+\times \Lambda$, so that $\pi_*\nu_{\bf p} = \pi_{2*}\hat\mu_{\bf p}$. The measure $\hat\mu_{\bf p}$ is called the equilibrium measure (with respect to $\Phi$) associated to $\bf p$. Denote also by $h(\textbf p):= \mathop{\sum}\limits_{1 \le j \le |I|} p_j \log p_j$, and notice that $h(\bf p) =
 \int\psi \ d\hat \mu_{\bf p}$.
Let us denote now by $$\beta_n(x):= \text{Card}\{(\eta_1, \ldots, \eta_n) \in I^n, \ x \in \phi_{\eta_1}\circ \ldots \circ \phi_{\eta_n}(\Lambda)\}, \ \forall x \in \Lambda$$
More generally, we define for $\tau>0$, 
\begin{equation}\label{betap}
\beta_n(x, \tau,  \textbf p):= \text{Card}\{(\eta_1, \ldots, \eta_n) \in I^n, \ x \in \phi_{\eta_1}\circ \ldots \circ \phi_{\eta_n}(\Lambda), \ |\frac{\log(p_{\eta_1}\ldots p_{\eta_n})}{n} - h(\bf p)| < \tau \}
\end{equation}
As before if $x \in \phi_{\eta_1}\circ \ldots \circ \phi_{\eta_n}(\Lambda)$, then there exists a unique point $y \in \Lambda$ with $x = \phi_{\eta_1}\circ \ldots \circ \phi_{\eta_n}(y)$. When the system $\cl S$ satisfies Open Set Condition, then the overlap number $o(\cl S, \hat\mu_{\bf p})$ is equal to 1.

We prove now the following simpler expression for the overlap number in the case of Bernoulli projections for conformal IFS's with overlaps $\cl S$, by employing the function $\beta_n(\cdot)$, that counts the number of $n$-chains from $n$-roots in the limit set $\Lambda$:

\begin{cor}\label{usorcalc}
Let a conformal iterated function system with overlaps $\cl S = \{\phi_i, i \in I\}$ with limit set $\Lambda$, and consider $\textbf p$ an arbitrary  probabilistic vector, with $\hat\mu_{\bf p}$ being the equilibrium measure on $\Sigma_I^+ \times \Lambda$ associated to $\bf p$. Then, the overlap number $o(\cl S, \hat \mu_{\bf p})$ can be computed as:
$$o(\cl S, \hat\mu_{\bf p}) = \exp\Big(\mathop{\lim}\limits_{\tau \to 0}\mathop{\lim}\limits_n \frac 1n \int_{\Sigma_I^+} \log \beta_n(\pi\omega, \tau, \textbf p) \ d\nu_{\bf p}(\omega)\Big)$$

In particular, we obtain the (topological) overlap number of $\cl S$, by integrating with respect to the uniform Bernoulli measure $\nu_{(\frac{1}{|I|}, \ldots, \frac{1}{|I|})}$,
$$o(\cl S) = \exp\Big(\mathop{\lim}\limits_n \frac 1n \int_{\Sigma_I^+} \log \beta_n(\pi\omega) \ d\nu_{(\frac{1}{|I|}, \ldots, \frac{1}{|I|})}(\omega)\Big)$$
\end{cor}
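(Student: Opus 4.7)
The plan is to reduce the integral defining $o(\cl S,\hat\mu_{\bf p})$ in Corollary \ref{maxent} to the integral of $\log\beta_n$ against $\nu_{\bf p}$, by combining (i) a pointwise rewriting of $b_n$ in terms of $\beta_n$, (ii) the $\Phi^n$-invariance of $\hat\mu_{\bf p}$, and (iii) the projection identity $\pi_{2*}\hat\mu_{\bf p}=\pi_*\nu_{\bf p}$ furnished by Theorem \ref{equal}.

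First I would make the pointwise identification in the Bernoulli setting. With $\psi(\omega,x)=\log p_{\omega_1}$ one has $S_n\psi(\eta,y)=\log(p_{\eta_1}\cdots p_{\eta_n})$, depending only on $\eta_1,\ldots,\eta_n$; and using the marginal $(\pi_1)_*\hat\mu_{\bf p}=\nu_{\bf p}$ established inside the proof of Theorem \ref{equal}, one gets $\int\psi\,d\hat\mu_{\bf p}=h(\bf p)$. Therefore the Birkhoff condition defining $\Delta_n((\omega,x),\tau,\hat\mu_{\bf p})$ is exactly $\big|\tfrac{1}{n}\log(p_{\eta_1}\cdots p_{\eta_n})-h(\bf p)\big|<\tau$, the same condition used in $\beta_n$. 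The geometric condition in $b_n$ reads $\phi_{\omega_n\cdots\omega_1}(x)\in\phi_{\eta_n\cdots\eta_1}(\Lambda)$, while in $\beta_n$ it reads $\phi_{\omega_n\cdots\omega_1}(x)\in\phi_{\eta_1\cdots\eta_n}(\Lambda)$; the reversal $(\eta_1,\ldots,\eta_n)\mapsto(\eta_n,\ldots,\eta_1)$ is an involution of $I^n$ preserving the (symmetric) Birkhoff condition and matching the two geometric conditions. I would conclude
$$b_n\big((\omega,x),\tau,\hat\mu_{\bf p}\big)\;=\;\beta_n\big(\phi_{\omega_n\cdots\omega_1}(x),\tau,\bf p\big)\;=\;\beta_n\big(\pi_2(\Phi^n(\omega,x)),\tau,\bf p\big).$$

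Next I would use the $\Phi^n$-invariance of $\hat\mu_{\bf p}$. The previous identity says that $\log b_n(\cdot,\tau,\hat\mu_{\bf p})=\big(\log\beta_n(\cdot,\tau,\bf p)\circ\pi_2\big)\circ\Phi^n$, so
$$\int_{\Sigma_I^+\times\Lambda}\log b_n\,d\hat\mu_{\bf p}\;=\;\int_{\Sigma_I^+\times\Lambda}\log\beta_n(x,\tau,\bf p)\,d\hat\mu_{\bf p}(\omega,x)\;=\;\int_\Lambda\log\beta_n(x,\tau,\bf p)\,d(\pi_{2*}\hat\mu_{\bf p})(x).$$
Theorem \ref{equal} rewrites $\pi_{2*}\hat\mu_{\bf p}=\pi_*\nu_{\bf p}$, so the last integral equals $\int_{\Sigma_I^+}\log\beta_n(\pi\omega,\tau,\bf p)\,d\nu_{\bf p}(\omega)$. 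Dividing by $n$, sending $n\to\infty$ and then $\tau\to 0$, and exponentiating, reproduces the defining formula of $o(\cl S,\hat\mu_{\bf p})$ in Corollary \ref{maxent} and gives the first claimed expression.

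For the topological overlap number I would specialise to $\bf p=(\tfrac{1}{|I|},\ldots,\tfrac{1}{|I|})$: the potential $\psi\equiv-\log|I|$ is constant, its unique equilibrium state is $\hat\mu_0$, and the Birkhoff condition $|{-\log|I|}-({-\log|I|})|<\tau$ is automatic for every tuple $(\eta_1,\ldots,\eta_n)\in I^n$ and every $\tau>0$. Hence $\beta_n(x,\tau,\bf p)=\beta_n(x)$ independently of $\tau$, the $\tau\to 0$ limit trivialises, and the second formula drops out of the first. The only delicate point I anticipate is bookkeeping — keeping track of the reversal of $n$-chains and consistently distinguishing $\pi$, $\pi_2$, and $\pi_2\circ\Phi^n$, as well as the order of the compositions $\phi_{\omega_n\cdots\omega_1}$ versus $\phi_{\omega_1\cdots\omega_n}$; no additional analytic machinery beyond Theorem \ref{equal} and the $\Phi^n$-invariance of $\hat\mu_{\bf p}$ appears to be needed.
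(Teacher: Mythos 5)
Your proposal is correct and follows essentially the same route as the paper: identify $b_n(\cdot,\tau,\hat\mu_{\bf p})$ pointwise with $\beta_n(\cdot,\tau,{\bf p})\circ\pi_2\circ\Phi^n$ (the paper silently absorbs the chain-reversal bijection you spell out), use the $\Phi$-invariance of $\hat\mu_{\bf p}$ to drop the $\Phi^n$, and invoke Theorem \ref{equal} to transport the integral to $\nu_{\bf p}$ on $\Sigma_I^+$. The only cosmetic difference is that you treat the general $\bf p$ first and specialise to the uniform vector, while the paper proves the topological case and remarks that the general case is similar.
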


\begin{proof}
We prove here  the second part of the statement, about the topological overlap number; the first part follows similarly.
Let us denote by $\textbf p = (\frac {1}{|I|}, \ldots, \frac{1}{|I|})$, and consider $\mu_{\bf p} = \pi_*\nu_{\bf p}$. As in Theorem \ref{equal} there exists a corresponding $\Phi$-invariant measure $\hat \mu_{\bf p}$ on $\Sigma_I^+ \times \Lambda$.  
We have from Theorem \ref{equal} that $\pi_*\nu_{\bf p} = \pi_{2*}\hat\mu_{\bf p}$, hence 
$$\int_{\Lambda} \log\beta_n(x) \ d\mu_{\bf p}(x) = \int_{\Sigma_I^+\times \Lambda}\log\beta_n \circ \pi_2(\omega, x) \ d\hat\mu_{\bf p}(\omega, x) = \int_{\Sigma_I^+\times \Lambda}\log\beta_n \circ \pi_2\circ \Phi^n(\omega, x) \ d\hat\mu_{\bf p}(\omega, x)$$
But notice that $\beta_n\circ \pi_2\circ\Phi^n(\omega, x) = \beta_n(\phi_{\omega_n}\circ \ldots \circ \phi_{\omega_1}(x)) = \text{Card}\{(\eta_1, \ldots, \eta_n) \in I^n, \ \phi_{\omega_n}\circ \ldots \circ \phi_{\omega_1}(x) \in \phi_{\eta_1} \circ \ldots \circ \phi_{\eta_n}(\Lambda) \} = b_n(\omega, x)$, for any $(\omega, x)$.
Therefore, from the last displayed equality, it follows that:
$$\int_{\Sigma_I^+} \log \beta_n(\pi\omega) \ d\nu_{(\frac{1}{|I|}, \ldots, \frac{1}{|I|})} (\omega) = \int_\Lambda\log \beta_n(x)\  d\mu_{\bf p}(x) = \int_{\Sigma_I^+\times \Lambda} \log b_n(\omega, x) \ d\hat\mu_{\bf p}(\omega, x)$$

\end{proof}

We now show that overlap numbers of conformal IFS and of equilibrium measures on $\Sigma_I^+ \times \Lambda$, can be used to estimate the dimensions of the associated projection measures on $\Lambda$.  
Denote the Hausdorff dimension (for sets or measures) by $HD$. Recall that, in general for a measure $\mu$ on a metric space $X$,  its Hausdorff dimension is defined by: $$HD(\mu):= \inf\{HD(Z), Z \subset X \ \text{with} \ \mu(X\setminus Z)=0\}$$ In the following Theorem, we give an upper estimate for $HD(\mu_\psi)$, by estimating $HD(\Lambda \setminus Z(\psi))$ for some set $Z(\psi)\subset \Lambda$ of $\mu_\psi$-measure zero  with the help of the overlap number $o(\cl S, \hat \mu_\psi)$. Moreover, we will construct explicitly this set of $\mu_\psi$-measure zero $Z(\psi)$ below.

\begin{thm}\label{dim}
Consider a finite conformal iterated function system $\cl S = \{\phi_i\}_{i \in I}$ with limit set $\Lambda$, $\pi: \Sigma_I^+\to \Lambda$ be the canonical projection, and let a H\"older continuous potential $\psi: \Sigma_I^+\times \Lambda \to \mathbb R$, with its (unique) equilibrium measure $\hat \mu_\psi$; and let $\mu_\psi:= \pi_{2*}\hat\mu_\psi$ be the projection as  in (\ref{mupsi}). Then, $$HD(\mu_\psi) \le t(\cl S, \psi),$$
where $t(\cl S, \psi)$ is the unique zero of the pressure function with respect to the shift $\sigma: \Sigma_I^+ \to \Sigma_I^+$,  $$t \to P_\sigma(t\log|\phi_{\omega_1}'(\pi(\sigma\omega))| - \log o(\cl S, \hat\mu_\psi))$$

\end{thm}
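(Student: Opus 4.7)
The plan is to exhibit an explicit set $Z(\psi)\subset \Lambda$ with $\mu_\psi(Z(\psi))=0$ and to bound $HD(\Lambda\setminus Z(\psi))$ from above by producing, at each large scale $n$, an economical cover of $\Lambda\setminus Z(\psi)$ by level-$n$ cylinders whose efficiency factor is exactly $o(\cl S,\hat\mu_\psi)^{n}$.

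First, define the generic set. The map $\Phi$ is expansive (distance-expanding in the first coordinate, contracting in the second), so the Gibbs state $\hat\mu_\psi$ is ergodic. Birkhoff's theorem applied to the H\"older function $(\omega,x)\mapsto \log|\phi'_{\omega_1}(x)|$ gives, via the chain rule for conformal compositions,
$$\tfrac{1}{n}\log|\phi'_{\omega_n\cdots\omega_1}(x)| \longrightarrow \chi(\psi):=\int\log|\phi'_{\omega_1}(x)|\,d\hat\mu_\psi(\omega,x)<0$$
for $\hat\mu_\psi$-a.e.\ $(\omega,x)$. Combining this with Theorem~\ref{fold} (which gives $\tfrac{1}{n}\int\log b_n(\cdot,\tau,\hat\mu_\psi)\,d\hat\mu_\psi\to\log o(\cl S,\hat\mu_\psi)$ as $n\to\infty$, $\tau\to 0$), and using Egorov's theorem together with the bound $b_n\le|I|^n$, one produces, for each $\epsilon>0$, a measurable set $A_\epsilon\subset\Sigma_I^+\times\Lambda$ with $\hat\mu_\psi(A_\epsilon)\ge 1-\epsilon$ and a sequence $\tau_n\to 0$ so that $b_n((\omega,x),\tau_n,\hat\mu_\psi)\ge e^{n(\log o(\cl S,\hat\mu_\psi)-\epsilon)}$ on $A_\epsilon$ for all $n\ge N_\epsilon$. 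The null set is $Z(\psi):=\Lambda\setminus\bigcup_k\pi_2(A_{\epsilon_k})$ for some $\epsilon_k\to 0$.

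Next, produce an efficient cover of $\pi_2(A_\epsilon)$ at scale $n$. For $(\omega,x)\in A_\epsilon$ the cylinder $\phi_{\omega|_n}(\Lambda)$ contains $x$ and, by Bounded Distortion for conformal contractions, has diameter comparable to $|\phi'_{\omega|_n}(\pi(\sigma^n\omega))|$; moreover it is one of at least $e^{n(\log o(\cl S,\hat\mu_\psi)-\epsilon)}$ distinct level-$n$ cylinders indexed by $\hat\mu_\psi$-generic $n$-chains in $\Delta_n((\omega,x),\tau_n,\hat\mu_\psi)$ all containing $x$. A double-counting argument on the pairs (generic cylinder, contained generic point) extracts a subfamily $\cl F_n\subset I^n$ that still covers $\pi_2(A_\epsilon)$ and satisfies, by chain rule and bounded distortion,
$$\sum_{\omega\in \cl F_n}\bigl(\text{diam}\,\phi_\omega(\Lambda)\bigr)^{t}\ \lesssim\ \frac{e^{n\epsilon}}{o(\cl S,\hat\mu_\psi)^{n}}\sum_{\omega\in I^n}|\phi'_\omega(\pi(\sigma^n\omega))|^{t}\ \asymp\ \exp\!\Bigl(n\bigl[P_\sigma\bigl(t\log|\phi'_{\omega_1}(\pi(\sigma\omega))|-\log o(\cl S,\hat\mu_\psi)\bigr)+\epsilon\bigr]\Bigr).$$
For $t$ slightly larger than the unique zero $t(\cl S,\psi)$ of this pressure function, and $\epsilon$ correspondingly small, the right-hand side is summable in $n$, while $\max_{\omega\in\cl F_n}\text{diam}\,\phi_\omega(\Lambda)\to 0$. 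Hence $HD(\pi_2(A_\epsilon))\le t(\cl S,\psi)+o(1)$ as $\epsilon\to 0$. Since Hausdorff dimension of a countable union is the supremum, $HD\bigl(\bigcup_k\pi_2(A_{\epsilon_k})\bigr)\le t(\cl S,\psi)$, and since this union has full $\mu_\psi$-measure, $HD(\mu_\psi)\le t(\cl S,\psi)$.

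The principal obstacle is the sparsification step. The count $b_n$ registers only $\hat\mu_\psi$-generic $n$-chains, whereas the trivial cover $\{\phi_\omega(\Lambda)\}_{\omega\in I^n}$ includes all chains; the non-generic ones are far more numerous in cardinality despite each carrying exponentially small $\hat\mu_\psi$-mass (by estimate (\ref{birk}) in the proof of Theorem~\ref{fold}). Ensuring that $\cl F_n$ avoids non-generic cylinders while still covering every point of $\pi_2(A_\epsilon)$ requires restricting $A_\epsilon$ so that each retained cylinder is indexed by a generic $n$-chain to at least one of its contained points, together with a careful Vitali- or pigeonhole-type selection that realizes the $o(\cl S,\hat\mu_\psi)^{-n}$ gain in cardinality from the multiplicity of generic $n$-chains.
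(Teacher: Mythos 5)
Your overall strategy coincides with the paper's (an explicit full-measure set, a high-multiplicity cover at scale $n$, extraction of an economical subcover, and the zero of the pressure function), but there are two genuine gaps and one unexecuted step. First, the Egorov step does not work as stated: Theorem \ref{fold} gives convergence of the \emph{integrals} $\frac1n\int\log b_n\,d\hat\mu_\psi$, not pointwise a.e.\ convergence of $\frac1n\log b_n$, and Egorov requires the latter. Integral convergence plus the upper bound $b_n\le|I|^n$ does not force the set $\{\frac1n\log b_n<\log o(\cl S,\hat\mu_\psi)-\delta\}$ to have small measure (the integrand could be small on a set of fixed positive measure and compensate elsewhere). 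The paper gets the measure decay of these bad sets $R_n(\hat\mu_\psi,\delta)$ by a contradiction argument that uses extra input: the pointwise Birkhoff convergence of $\frac1n\log J_{\Phi^n}(\hat\mu_\psi)$ (via the chain rule for the Jacobian) together with the two-sided comparison (\ref{jaco}) between $J_{\Phi^n}$ and the weighted preimage sums. You need some version of that argument; it cannot be replaced by Egorov.

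Second, you conflate the point at which the multiplicity is counted. By definition, $b_n((\omega,x),\tau,\hat\mu_\psi)$ counts $n$-chains $(\eta_1,\dots,\eta_n)$ with $\phi_{\eta_n\cdots\eta_1}(y)=\phi_{\omega_n\cdots\omega_1}(x)$, so the point covered at least $b_n$ times by $n$-fold images of $\Lambda$ is $\pi_2(\Phi^n(\omega,x))$, not $x$, and the cylinders involved are not "cylinders in $\Delta_n((\omega,x),\tau_n,\hat\mu_\psi)$ all containing $x$". Consequently the set you can cover with high multiplicity is $\pi_2(\Phi^n(A_\epsilon))$; to turn this into a full-measure statement you must push forward by $\Phi^n$ and invoke $\Phi$-invariance of $\hat\mu_\psi$, which is exactly why the paper works with $\Phi^{m_s}(Q_{m_s}(\hat\mu_\psi,\delta))$ and the sets $\Lambda_n(\hat\mu_\psi,\delta)=\pi_2\big(\cap_{s\ge n}\Phi^{m_s}(Q_{m_s})\big)$. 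Third, the sparsification you correctly identify as the principal obstacle is left as a gesture; a bare pigeonhole on pairs does not produce a subfamily that still \emph{covers}. The paper's concrete mechanism is to iterate the $5r$-covering theorem $M_s$ times, peeling off $M_s$ pairwise disjoint subfamilies each of which (after dilation by $5$) covers the set, and then selecting the one minimizing $\sum(\mathrm{diam}\,U)^t$, which is at most $M_s^{-1}$ times the total sum; this is where the factor $o(\cl S,\hat\mu_\psi)^{-n}$ is actually realized. Finally, a small slip: from $HD(\pi_2(A_{\epsilon_k}))\le t(\cl S,\psi)+c_k$ with $c_k\to0$, the union $\bigcup_k\pi_2(A_{\epsilon_k})$ only has dimension $\le t(\cl S,\psi)+\sup_kc_k$; you should instead use tail unions (or the paper's intersection over $\delta$) so that the infimum in the definition of $HD(\mu_\psi)$ yields the stated bound.
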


\begin{proof}

Let denote by $R_n(\hat\mu_\psi, \delta)$ the set of points $(\omega, x) \in \Sigma_I^+\times \Lambda$ for which the number of generic roots satisfies $b_n((\omega, x), \tau, \hat\mu_\psi) < \frac 12\cdot e^{n(F_\Phi(\hat\mu_\psi)-\delta)}$. We want to show that the $\hat\mu_\psi$-measure of these sets converges to 0, when $n\to \infty$. If this does not happen, then there exist an infinite sequence $\{k_n\}_n$ and a number $\beta >0$, such that $\hat\mu_\psi(R_{k_n}(\hat \mu_\psi, \delta)) > \beta >0, \forall n \ge 1$. 
Then, for all pairs $(\omega, x) \in R_{k_n}(\hat\mu_\psi, \delta)$, $$\frac{\log b_{k_n}((\omega, x), \tau, \hat \mu_\psi)}{k_n} < \frac{-\log 2}{k_n} + F_\Phi(\hat\mu_\psi) - \delta$$
Therefore, after integrating with respect to $\hat\mu_\psi$, 
$$\int_{R_{k_n}(\hat\mu_\psi, \delta)} \frac{\log b_{k_n}((\omega, x), \tau, \hat\mu_\psi)}{k_n} d\hat \mu_\psi(\omega, x) < \hat\mu_\psi(R_{k_n}(\hat\mu_\psi, \delta))\cdot (F_\Phi(\hat\mu_\psi)-\delta -\frac{\log 2}{k_n})$$
We now use the last displayed inequality, and the properties of $J_{\Phi^n}(\hat\mu_\psi)$ from the proof of Theorem \ref{fold} (namely relation  (\ref{jaco})); thus by adding the integral of $\frac{\log b_{k_n}((\omega, x), \tau, \hat\mu_\psi)}{k_n}$ over $R_{k_n}$ and the integral of $\frac{\log b_{k_n}((\omega, x), \tau, \hat\mu_\psi)}{k_n}$ over the complement of $R_{k_n}$, we  obtain that: 
\begin{equation}\label{star}
\begin{aligned}
\int_{\Sigma_I^+\times \Lambda} \frac{\log b_{k_n}((\omega, x), \tau, \hat\mu_\psi)}{k_n} & d\hat\mu_\psi(\omega, x)  < \hat\mu_\psi(R_{k_n}(\hat\mu_\psi, \delta))\cdot (F_\Phi(\hat\mu_\psi)-\delta-\frac{\log 2}{k_n}) \ +\\ &+ \int_{\Sigma_I^+\times \Lambda \setminus R_{k_n}(\hat\mu_\psi, \delta)} \frac{\log J_{\Phi^{k_n}}(\hat\mu_\psi)}{k_n} d\hat\mu_\psi(\omega, x)
\end{aligned}
\end{equation}
On the other hand, from the Chain Rule we know that $\log J_{\Phi^n}(\hat\mu_\psi)(\omega, x) = \log J_\Phi(\omega, x) + \ldots + \log J_\Phi(\hat\mu_\psi)(\Phi^{n-1}(\omega, x))$, for all $n \ge 1$. Therefore from the Birkhoff Ergodic Theorem, $$\frac{\log J_{\Phi^n}(\hat\mu_\psi)(\omega, x)}{n} \mathop{\to}\limits_{n \to \infty} F_\Phi(\hat\mu_\psi),$$ for $\hat\mu_\psi$-almost all $(\omega, x) \in \Sigma_I^+\times \Lambda$.  
Moreover, from (\ref{jaco}) we have that 
\begin{equation}\label{ineqJ}
J_{\Phi^n}(\hat\mu_\psi)(\omega, x) \le C \cdot \frac{\mathop{\sum}\limits_{\Phi^n(\eta, y) = \Phi^n(\omega, x)} e^{S_n\psi(\eta, y)}}{e^{S_n\psi(\omega, x)}} \le C |I|^n \cdot e^{n(C_1 - C_2)},
\end{equation}
 for all $n \ge 1$, where $C_2 \le \psi \le C_1$ on $\Sigma_I^+\times \Lambda$ (as the potential $\psi$ is continuous). This implies that the sequence $\{\frac 1n \log J_{\Phi^n}(\hat\mu_\psi)(\omega, x)\}_n$ is bounded by $\log C+\log |I| + C_1 - C_1$, independently of $(\omega, x)$. 
Since $\log J_\Phi(\hat\mu_\psi)$ is integrable, we obtain then from the Birkhoff Ergodic Theorem, that $\int_{\Sigma_I^+\times \Lambda} \frac{\log J_{\Phi^n}(\hat\mu_\psi)(\omega, x)}{n} \ d\hat\mu_\psi(\omega, x) \mathop{\to}\limits_{n \to \infty} F_\Phi(\hat\mu_\psi)$, and similarly, $$\begin{aligned}
\gamma_n(\hat\mu_\psi, \delta):= &\int_{\Sigma_I^+\times \Lambda \setminus R_n(\hat\mu_\psi, \delta)} \big(\frac{\log J_{\Phi^n}(\hat\mu_\psi)}{n} - F_\Phi(\hat\mu_\psi)\big) \ d\hat\mu_\psi(\omega, x) = \\ 
&=\int_{\Sigma_I^+\times \Lambda} \big(\frac{\log J_{\Phi^n}(\hat\mu_\psi)}{n} - F_\Phi(\hat\mu_\psi)\big) \cdot \chi_{\Sigma_I^+\times \Lambda \setminus R_n(\hat\mu_\psi, \delta)} d\hat\mu_\psi(\omega, x) \mathop{\to}\limits_{n \to \infty} 0
\end{aligned}$$
Hence for any integer $n \ge 1$, $$\int_{\Sigma_I^+\times \Lambda} \frac{\log J_{\Phi^n}(\hat\mu_\psi)}{n} d\hat\mu_\psi = \gamma_n(\hat\mu_\psi, \delta)+ F_\Phi(\hat\mu_\psi)\cdot \hat\mu_\psi(\Sigma_I^+\times \Lambda \setminus R_n(\hat\mu_\psi, \delta))$$
Therefore, we obtain from (\ref{star}) that:
$$\begin{aligned} \int_{\Sigma_I^+\times \Lambda} & \frac{\log b_{k_n}((\omega, x), \tau, \hat\mu_\psi)}{k_n} d\hat\mu_\psi(\omega, x) < \hat\mu_\psi(R_{k_n}(\hat\mu_\psi, \delta)) (F_\Phi(\hat\mu_\psi) - \delta-\frac{\log 2}{k_n}) + \gamma_{k_n}(\hat\mu_\psi, \delta) +\\ & +F_\Phi(\hat\mu_\psi)\cdot \hat\mu_\psi(\Sigma_I^+\times \Lambda \setminus R_{k_n}(\hat\mu_\psi, \delta)) = \gamma_{k_n}(\hat\mu_\psi, \delta) + F_\Phi(\hat\mu_\psi) - \hat\mu_\psi(R_{k_n}(\hat\mu_\psi, \delta)(\delta + \frac{\log 2}{k_n})
\end{aligned}
$$
However if $\hat\mu_\psi(R_{k_n}(\hat\mu_\psi, \delta)) > \beta$ for $n > n(\delta)$ (for some integer $n(\delta) \ge 1$), then  it follows from the above and from the fact that: $\gamma_n(\hat\mu_\psi, \delta) \to 0$, that $$\begin{aligned}
\int_{\Sigma_I^+\times \Lambda} \frac{\log b_{k_n}((\omega, x), \tau, \hat\mu_\psi)}{k_n} d\hat\mu_\psi(\omega, x) &< F_\Phi(\hat\mu_\psi) - \beta(\delta +\frac{\log 2}{k_n}) + \gamma_{k_n}(\hat\mu_\psi, \delta) < F_\Phi(\hat\mu_\psi)
\end{aligned}
$$
 But then, this would give contradiction with Theorem \ref{fold}. Hence, for $\delta>0$ fixed there exists a sequence of positive numbers $\alpha_n \mathop{\to}\limits_{n \to \infty} 0$, such that the set $R_n(\hat\mu_\psi, \delta)$ of points $(\omega, x) \in \Sigma_I^+\times \Lambda$ for which $b_n((\omega, x), \tau, \hat\mu_\psi) < \frac 12 e^{n(F_\Phi(\hat\mu_\psi) - \delta)}$, has $\hat\mu_\psi$-measure that satisfies:
$$\hat\mu_\psi(R_n(\hat\mu_\psi, \delta)) < \alpha_n, \ \text{for} \ n > n(\delta)$$
Let denote now the complement of the set $R_n(\hat\mu_\psi, \delta)$ in $\Sigma_I^+\times \Lambda$ by: $$Q_n(\hat\mu_\psi, \delta):=\Sigma_I^+ \times \Lambda \setminus R_n(\hat\mu_\psi, \delta)$$ 
From the $\Phi$-invariance of  $\hat\mu_\psi$ on $\Sigma_I^+\times \Lambda$, and from the definition of $Q_n(\hat\mu_\psi, \delta)$, we obtain that $$\hat\mu_\psi(\Phi^n(Q_n(\hat\mu_\psi, \delta)) > 1-\alpha_n, \ n \ge n(\delta)$$  And from the definition of the set $\Phi^{n}(Q_n(\hat\mu_\psi, \delta))$, it follows that for any for point $(\eta', y') \in \Phi^n(Q_n(\hat\mu_\psi, \delta))$, there exist at least $\frac 12 e^{n(F_\Phi(\hat\mu_\psi) - \delta)}$ indices $\underline{i}=(i_1, \ldots, i_n) \in I^n$, such that $y' \in \phi_{\underline{i}}(\Lambda)= \phi_{i_1}\circ \ldots \circ \phi_{i_n}(\Lambda)$.  
From above, the sequence $\hat\mu_\psi(R_n(\hat\mu_\psi, \delta))$ converges to 0, so there exists an increasing sequence of integers $m_n \to \infty$ such that: \  $\hat\mu_\psi(R_{m_1}(\hat\mu_\psi, \delta)) < \frac{1}{2}, \ \hat\mu_\psi(R_{m_2}(\hat\mu_\psi)) < \frac{1}{2^2}, \ldots, \hat\mu_\psi(R_{m_n}(\hat\mu_\psi, \delta)) < \frac{1}{2^n}, \ldots$. Employing the sequence $\{m_n\}_n$, define now the following measurable subsets of $\Lambda$, 
$$\Lambda_n(\hat\mu_\psi, \delta):= \pi_2\big(\mathop{\cap}\limits_{s \ge n} \Phi^{m_s}(Q_{m_s}(\hat\mu_\psi, \delta))\big),$$
where $\pi_2: \Sigma_I^+\times \Lambda \to \Lambda$ is the canonical  projection to the second coordinate.
Moreover,  denote the union of the Borel subsets in $\Lambda$ introduced above by, $$\Lambda(\hat\mu_\psi, \delta):= \mathop{\cup}\limits_{n \ge 1} \Lambda_n(\hat\mu_\psi, \delta) = \pi_2\big(\mathop{\cup}\limits_{n \ge 1}\mathop{\cap}\limits_{s\ge n} \Phi^{m_s}(Q_{m_s}(\hat\mu_\psi, \delta))\big)$$
Firstly,  notice that  from the definition of the sequence of integers  $\{m_n\}_{n\ge 1}$, we have $$\hat\mu_\psi\big(\mathop{\cap}\limits_{s\ge n} \Phi^{m_s}(Q_{m_s}(\hat\mu_\psi, \delta))\big) \ge 1-\mathop{\sum}\limits_{s\ge n} \hat\mu_\psi\big(\Sigma_I^+\times \Lambda \setminus \Phi^{m_s}(Q_{m_s}(\hat\mu_\psi, \delta))\big) \ge 1 - \mathop{\sum}\limits_{s\ge n} \frac{1}{2^s} = 1- \frac{1}{2^{n-1}}$$
Therefore by taking the union of these sets over all $n \ge 1$, recalling that $\mu_\psi = \pi_{2*}(\hat\mu_\psi)$, and  observing that $\mu_\psi(\Lambda(\hat\mu_\psi, \delta)) = \hat\mu_\psi\big(\pi_2^{-1}(\Lambda(\hat\mu_\psi, \delta))\big)\ge \hat\mu_\psi\big(\mathop{\cup}\limits_{n\ge 1} \mathop{\cap}\limits_{s\ge n} \Phi^{m_s}(Q_{m_s}(\hat\mu_\psi, \delta))\big)$, we obtain that 
\begin{equation}\label{me}
\hat\mu_\psi\big(\mathop{\cup}\limits_{n\ge 1} \mathop{\cap}\limits_{s\ge n} \Phi^{m_s}(Q_{m_s}(\hat\mu_\psi, \delta))\big) = 1, \ \text{hence} \ \ \mu_\psi(\Lambda(\hat\mu_\psi, \delta)) = 1
\end{equation}

We now investigate the influence of the number of roots on the Hausdorff dimension of the set $\Lambda(\hat\mu_\psi, \delta)$. Recall from above that,  for any $(\eta', y') \in \Phi^n(Q_n(\hat\mu_\psi, \delta))$, there exist at least $\frac 12 e^{n(F_\Phi(\hat\mu_\psi) - \delta)}$ indices $\underline{i}=(i_1, \ldots, i_n) \in I^n$, such that $y' \in \phi_{\underline{i}}(\Lambda)= \phi_{i_1}\circ \ldots \circ \phi_{i_n}(\Lambda)$.  
Hence the points in the projection $\pi_2(\Phi^n(Q_n(\hat\mu_\psi, \delta)))$ are covered at least $\frac 12 e^{n(F_\Phi(\hat\mu_\psi) - \delta)}$ times by images of $\Lambda$, through compositions of $n$ maps of type $\phi_i$. 
Now,  $\cl S$ satisfies the condition that there exists $\kappa \in (0, 1)$ such that $|\phi_i'| < \kappa$ on $\Lambda$.  
It follows that, for any indices $i_1, \ldots, i_n \in I$, \ $\text{diam}(\phi_{i_1} \circ \ldots \circ \phi_{i_n}(\Lambda)) \le \kappa^n$.
Thus, every point in $\pi_2(\Phi^n(Q_n(\hat\mu_\psi, \delta)))$ can be covered at least $\frac 12 e^{n(F_\Phi(\hat\mu_\psi) - \delta)}$ times with sets of diameter less than $\kappa^n$.  \
For $\alpha \ge 0$, let us denote now by $t(\alpha)$ the unique zero of the following pressure function with respect to the shift map $\sigma: \Sigma_I^+ \to \Sigma_I^+$, 
\begin{equation}\label{tt}
t \to P_\sigma(t|\phi_{\omega_1}'(\sigma \omega)| - \alpha)
\end{equation}
Take an arbitrary number $t > t(F_\Phi(\hat\mu_\psi) - \delta)$; we assume without loss of generality that $F_\Phi(\hat\mu_\psi) > 0$ and that $\delta$ is small enough, so that $\delta < F_\Phi(\hat\mu_\psi)$. Let define the pressure function  $$p_\delta(s):= P( s|\phi_{\omega_1}'(\sigma \omega)| - F_\Phi(\hat\mu_\psi) + \delta), \ s \in \mathbb R $$ From assumption above, $p_\delta(t) < 0$. So from the conformality of the contractions $\phi_i$, and by denoting in general $\phi_\eta:= \phi_{\eta_1} \circ \ldots \circ \phi_{\eta_m}$ for  $\eta = (\eta_1, \ldots, \eta_m) \in I^m, m \ge 1$, it follows that for $n$ large:
\begin{equation}\label{pressure}
\mathop{\sum}\limits_{|\omega| = n} |\phi_{\omega}'|^t e^{-n(F_\Phi(\hat\mu_\psi) - \delta)} \le e^{\frac{n\cdot p_\delta(t)}{4} }
\end{equation}
Now for any $s \ge n$, from the above definition of $Q_{m_s}(\hat\mu_\psi, \delta)$, it follows that any point in $\Lambda_n(\hat\mu_\psi, \delta)$ can be covered with at least $M_s:= \frac 12 e^{m_s(F_\Phi(\hat\mu_\psi) - \delta)}$  sets $\phi_\eta(V)$ for $|\eta| = m_s$, and every one of these sets $\phi_\eta(V)$ has diameter less than $\kappa^{m_s}$. 
Denote the collection of the above sets $\phi_\eta(V)$ by $\cl U_s$, so $\cl U_s$ is a cover of $\Lambda_n(\hat\mu_\psi, \delta)$. \
We want now to perform extractions from this cover $\cl U_s$ of $\Lambda_n(\hat\mu_\psi, \delta)$ (by using its large multiplicity),  in such a way that in the end we obtain a subcover which is minimal, from the point of view of the sum of diameters raised to power $t$. This will be the  subcover which we shall use to estimate the Hausdorff dimension of the set $\Lambda_n(\hat\mu_\psi, \delta)$. \ 
We have that the maps $\phi_\eta$ are conformal, so we can apply the $5r$-Covering Theorem (see \cite{Mat}), where we consider $5U$ to denote the ball with the same center as $U$ and 5 times the radius of $U$. One can then extract a subfamily $\cl U_s(1) \subset \cl U_s$, such that the sets $5U, U \in \cl U_s(1)$,  cover $\Lambda_n(\hat \mu_\psi, \delta) $, and so that the sets in $\cl U_s(1)$ are mutually disjoint. From conformality we have that there exists $x, r$ and a fixed constant $C$ independent of $U$, such that $B(x, r) \subset U \subset B(x, Cr)$. \ 
We then eliminate this subfamily $\cl U_s(1)$. Since it was disjointed, \, the multiplicity of the cover $\cl U_s$ of $\Lambda_n(\hat\mu_\psi, \delta)$ is still at least $M_s -1$.
 Therefore we can repeat this procedure and will extract a second subfamily $\cl U_s(2)$ in $\cl U_s \setminus \cl U_s(1)$, which is disjointed and such that $5U, U \in \cl U_s(2)$ cover the set $\Lambda_n(\hat\mu_\psi, \delta)$. After eliminating both $\cl U_s(1)$ and $\cl U_s(2)$ from $\cl U_s$, the multiplicity of the cover is at least $M_s - 2$. 
By induction, we obtain thus $M_s$ subfamilies  $\cl U_s(j)$, which are disjointed and such that $5U, U \in \cl U_s(j)$, cover $\Lambda_n(\hat\mu_\psi, \delta)$. 
We then take, out of these subfamilies constructed above,  the subfamily $\cl U_s(j_0)$ for which the expression $\mathop{\sum}\limits_{U \in \cl U_s(j_0)} (\text{diam} U)^t$ is minimal.  Then from (\ref{pressure}), we obtain:
\begin{equation}\label{min}
\mathop{\sum}\limits_{U \in \cl U_s(j_0)} (\text{diam} U)^t \le \frac{1}{M_s} \mathop{\sum}\limits_{U \in \cl U_s} (\text{diam} U)^t \le C e^{m_sp_\delta(t)/4} < 1,
\end{equation}
for some constant $C>0$,  independent of $s, n$ large. Since for any $s \ge n$, we can obtain such minimal covers $\cl U_s(j_0)$ for the set $\Lambda_n(\hat\mu_\psi, \delta)$ , and since $t$ was chosen arbitrarily larger than $t(F_\Phi(\hat\mu_\psi)- \delta)$, it follows from (\ref{min}) that: $$HD(\Lambda_n(\hat\mu_\psi, \delta)) \le t(F_\Phi(\hat\mu_\psi) - \delta)$$
Now recall the definition of $\Lambda(\hat\mu_\psi, \delta) = \mathop{\cup}\limits_{n \ge 1} \Lambda_n(\hat\mu_\psi, \delta)$. From the last estimate, we infer that $$HD(\Lambda(\hat\mu_\psi, \delta)) \le t(F_\Phi(\hat\mu_\psi) - \delta)$$
Also from (\ref{me}), $\mu_\psi(\Lambda(\hat\mu_\psi, \delta)) =1$. Define now the set $\Lambda(\psi):= \mathop{\cap}\limits_{\delta >0} \Lambda(\hat\mu_\psi, \delta) = \mathop{\cap}\limits_{n \ge 1} \Lambda(\hat\mu_\psi, \frac 1n)$. We have then that $\mu_\psi(\Lambda(\psi)) = 1$. Let us now remark that from definition (\ref{tt}) of the zero $t(\alpha)$, and from the continuity of the pressure function, we obtain that  $t(F_\Phi(\hat\mu_\psi) - \delta) \to t(F_\Phi(\hat\mu_\psi))$ when $\delta \to 0$. But from Theorem \ref{fold},  we know that $\log o(\cl S, \psi) = F_\Phi(\hat\mu_\psi)$. Hence, by taking the set $Z(\psi):= \Lambda \setminus \Lambda(\psi),$ we have $\mu_\psi(Z(\psi)) = 0$; thus from the definition of $HD(\mu_\psi)$,  \ $HD(\mu_\psi) \le HD(\Lambda \setminus Z(\psi)) \le t(\cl S, \psi)$.

\end{proof}

\section{Applications to Bernoulli convolutions.}

Consider the random series $\mathop{\sum}\limits_{n \ge 0} \pm \lambda^n$ for $\lambda \in (0, 1)$ where the $+, -$ signs are taken independently and with equal probability, and let us denote its distribution by $\nu_\lambda$. This is called a Bernoulli convolution, since it is in fact the infinite convolution of the atomic measures $\frac 12(\delta_{-\lambda^n}+ \delta_{\lambda^n})$, for $n \ge 0$ (for eg \cite{E}, \cite{So}). The probability measure $\nu_\lambda$ can be written also as the self-similar measure associated to the probability vector $(\frac 12, \frac 12)$ and to the  iterated function system $$\cl S_\lambda = \{S_{1}, S_{2}\},$$
where $S_{1}(x) = \lambda x-1, \ S_{2}(x) = \lambda x+1, \ x \in \mathbb R$. Hence, $\nu_\lambda$ satisfies the self-similarity relation: $$\nu_\lambda = \frac 12 \nu_\lambda \circ S_1^{-1} + \frac 12 \nu_\lambda \circ S_2^{-1}$$ 
The case $\lambda \in (0, \frac 12)$ corresponds to $\cl S_\lambda$ having no overlaps, while the case when $\lambda \in [\frac 12, 1)$ corresponds to the more difficult situation of the iterated function system  $\cl S_\lambda$ having overlaps. 
We assume in the sequel that $\lambda \in (\frac 12, 1)$, thus we are in the case when $\cl S_\lambda$ has overlaps. 
The associated limit set $\Lambda_\lambda$ is in this case the whole interval $I_\lambda = [-\frac{1}{1-\lambda}, \frac{1}{1-\lambda}]$.
 The measure $\nu_\lambda$ can be viewed also as the projection $\pi_{\lambda*}\nu_{(\frac 12, \frac 12)}$, where $\nu_{(\frac 12, \frac 12)}$ is the Bernoulli measure on $\Sigma_2^+$ generated by the vector $(\frac 12, \frac 12)$, and $\pi_\lambda: \Sigma_2^+ \to I_\lambda$ is the canonical coding map. It is well-known that the measure $\nu_\lambda$ can be either singular or absolutely continuous. Several results on Bernoulli convolutions are in the paper by Peres, Schlag and Solomyak \cite{PSS}.
 The case $\lambda > \frac 12$ attracted a lot of interest, starting with Erd\"os who proved in \cite{E} that,  when $\frac 1\lambda$ is a Pisot number, then $\nu_\lambda$ is singular. Then later Solomyak showed in \cite{So} that the measure $\nu_\lambda$ is absolutely continuous for Lebesgue-a.e $\lambda \in [\frac 12, 1)$; the method of transversality was used in \cite{So}, and also by Peres and Schlag \cite{PSch}, and Peres and Solomyak \cite{PS}.  Notice that, if $\nu_\lambda$ is absolutely continuous, then $HD(\nu_\lambda) = 1$. From the point of view of actual values of $\lambda$ involved,  Garsia proved in \cite{G} that $\nu_\lambda$ is absolutely continuous when $\lambda^{-1}$ is an algebraic integer in $(1, 2)$, whose monic polynomial has other roots outside the unit circle and constant coefficient $\pm 2$. For example when $\lambda^{-1} = 2^{\frac 1m}, \ m \ge 2$, $\nu_\lambda$ is absolutely continuous,  so $HD(\nu_\lambda) = 1$.  
Przytycki and Urba\'nski (\cite{PU}) proved that, if $\lambda^{-1}$ is the inverse of a Pisot number in $(1, 2)$, then $HD(\nu_\lambda) < 1$.  
In the special case when $\lambda = \frac{\sqrt 5 - 1}{2}$ (the reciprocal of the Pisot number $\frac{\sqrt{5} +1}{2}$, the golden mean), Alexander and Zagier found in \cite{AZ}  precise estimates for $HD(\nu_\lambda)$, and they showed that $0.99557 < HD(\nu_\lambda) < 0.99574$. Recently, Hochman showed in \cite{H} that $HD(\nu_\lambda) = 1$ for $\lambda$ outside a parameter  set of dimension zero in $(\frac 12, 1)$.
 
 For arbitrary $\lambda \in (\frac 12, 1)$,  Theorem \ref{dimest} below  gives an upper estimate for $HD(\nu_\lambda)$, by using an expression involving $o(\cl S_\lambda)$; this allows to obtain bounds also for the overlap numbers $o(\cl S_\lambda)$. 
In particular, if  $HD(\nu_\lambda) = 1$ for some value $\lambda \in (\frac 12, 1)$, then $o(\cl S_\lambda) \le 2\lambda$.
In general,  $1 \le o(\cl S_\lambda) \le 2$, for any $\lambda \in (\frac 12, 1)$; we show that in fact, the overlap number $o(\cl S_\lambda)$ is \textit{never} equal to 2 (even if, for $\lambda \to 1$ the overlaps become larger). For specific values of $\lambda$ (for eg $\lambda = 2^{-\frac 1m}, m \ge 2$, or $\lambda = \frac{\sqrt 5 -1}{2}$), we obtain then more precise bounds for $o(\cl S_\lambda)$. \ 
First, for arbitrary $\lambda \in (\frac 12, 1)$,  the measure $\nu_\lambda$ is supported on the limit set of $\cl S_\lambda$, which is the interval  $I_\lambda = [-\frac{1}{1-\lambda}, \frac{1}{1-\lambda}]$; the coding map is $\pi_\lambda: \Sigma_2^+ \to I_\lambda$. Recall that for $x \in  I_\lambda$ and $n\ge 2$, \ $\beta_n(x)$ denotes the number of $n$-chains $(\zeta_1, \ldots, \zeta_n) \in \{1, 2\}^n$ from points in $I_\lambda$ to $x$, i.e.  $x \in \phi_{\zeta_1\ldots \zeta_n}\big([-\frac{1}{1-\lambda}, \frac{1}{1-\lambda}]\big)$. From Corollary \ref{usorcalc}, in the formula for $o(\cl S_\lambda)$ we integrate $\log \beta_n$ with respect to the uniform Bernoulli measure $\nu_{(\frac 12, \frac 12)}$.
 
\begin{thm}\label{dimest}
For all $\lambda \in (\frac 12, 1)$, the following relation is satisfied for the Bernoulli convolution $\nu_\lambda$:
$$HD(\nu_\lambda) \  \le \ \frac{\log\frac{2}{o(\cl S_\lambda)}}{|\log \lambda|},$$
where $o(\cl S_\lambda)$ denotes the overlap number of $\cl S_\lambda$, which can be computed as: $$o(\cl S_\lambda) =  \exp\Big(\mathop{\lim}\limits_{n \to \infty} \frac 1n \int_{\Sigma_2^+} \log \beta_n(\pi_\lambda \omega) \ d\nu_{(\frac 12, \frac 12)}(\omega)\Big)$$ And from the above,   $o(\cl S_\lambda) \le 2 \lambda^{HD(\nu_\lambda)}$.
\end{thm}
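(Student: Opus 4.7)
The plan is to recognize this as a direct application of Theorem \ref{dim} together with Corollary \ref{usorcalc}, specialized to the affine self-similar system $\cl S_\lambda$ and the uniform Bernoulli vector $\mathbf p = (\tfrac12, \tfrac12)$. First I would identify $\nu_\lambda$ as a projection in the sense of the previous sections: taking $\psi\equiv \log\tfrac12$ on $\Sigma_2^+\times I_\lambda$ (the constant potential corresponding to $\mathbf p=(\tfrac12,\tfrac12)$), its equilibrium measure $\hat\mu_\psi$ coincides with the measure of maximal entropy $\hat\mu_0$ for $\Phi$, and Theorem \ref{equal} gives
$$
\nu_\lambda \;=\; \pi_{\lambda*}\nu_{(\tfrac12,\tfrac12)} \;=\; \pi_{2*}\hat\mu_\psi \;=\; \mu_\psi.
$$
Consequently $o(\cl S_\lambda, \hat\mu_\psi) = o(\cl S_\lambda)$, so Theorem \ref{dim} yields $HD(\nu_\lambda)\le t(\cl S_\lambda,\psi)$ where $t(\cl S_\lambda,\psi)$ is the unique zero of the pressure $t\mapsto P_\sigma\bigl(t\log|\phi_{\omega_1}'(\pi(\sigma\omega))| - \log o(\cl S_\lambda)\bigr)$.

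The next step is to exploit the fact that $\cl S_\lambda$ consists of affine maps of constant slope $\lambda$, so $|\phi_{\omega_1}'(\pi(\sigma\omega))| \equiv \lambda$. For any constant potential $c$ on $\Sigma_2^+$ one has $P_\sigma(c) = h_{\mathrm{top}}(\sigma) + c = \log 2 + c$, hence the pressure function collapses to an affine function of $t$:
$$
P_\sigma\bigl(t\log\lambda - \log o(\cl S_\lambda)\bigr) \;=\; \log 2 + t\log\lambda - \log o(\cl S_\lambda).
$$
Setting this equal to zero and solving for $t$ (using $\log\lambda<0$) gives
$$
t(\cl S_\lambda,\psi) \;=\; \frac{\log 2 - \log o(\cl S_\lambda)}{-\log\lambda} \;=\; \frac{\log\bigl(2/o(\cl S_\lambda)\bigr)}{|\log\lambda|},
$$
establishing the main inequality $HD(\nu_\lambda)\le \log(2/o(\cl S_\lambda))/|\log\lambda|$.

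For the integral formula for $o(\cl S_\lambda)$, I would simply invoke Corollary \ref{usorcalc} with $\mathbf p=(\tfrac12,\tfrac12)$: the uniform Bernoulli case produces precisely $o(\cl S_\lambda) = \exp\bigl(\lim_n \tfrac1n \int_{\Sigma_2^+}\log\beta_n(\pi_\lambda\omega)\,d\nu_{(\tfrac12,\tfrac12)}(\omega)\bigr)$, with $\beta_n(x)$ counting $n$-chains to $x$ in $I_\lambda$ as defined before that corollary. Finally, the bound $o(\cl S_\lambda)\le 2\lambda^{HD(\nu_\lambda)}$ is a purely algebraic rearrangement of the main inequality: multiplying by $|\log\lambda|>0$ gives $HD(\nu_\lambda)|\log\lambda|\le \log 2 - \log o(\cl S_\lambda)$, i.e.\ $\log o(\cl S_\lambda)\le \log 2 + HD(\nu_\lambda)\log\lambda = \log\bigl(2\lambda^{HD(\nu_\lambda)}\bigr)$, and exponentiation closes the argument.

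There is essentially no hard step here beyond invoking the machinery already built: the only things to double-check are that the constant-potential pressure identity $P_\sigma(\mathrm{const}) = \log 2 + \mathrm{const}$ is applied to the affine function $t\mapsto t\log\lambda - \log o(\cl S_\lambda)$ of the symbol $\omega$ (trivially constant in $\omega$ since the slope is constant), and that the hypotheses of Theorem \ref{dim} genuinely hold for $\cl S_\lambda$ — namely that $\cl S_\lambda$ is a finite conformal IFS with $|\phi_i'|=\lambda<1$ uniformly. Both points are immediate, so the principal content of the theorem lies in the machinery of Theorem \ref{dim} and Corollary \ref{usorcalc} rather than in any new argument at this step.
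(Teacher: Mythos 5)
Your proposal is correct and follows essentially the same route as the paper's proof: identify $\nu_\lambda$ as the $\pi_2$-projection of the equilibrium state via Theorem \ref{equal}, apply Corollary \ref{usorcalc} for the integral formula, and apply Theorem \ref{dim} with the pressure function collapsing to the affine expression $t\log\lambda + \log 2 - \log o(\cl S_\lambda)$ because the similarities have constant contraction ratio $\lambda$. The only difference is that you spell out the constant-potential pressure identity and the final algebraic rearrangement, which the paper leaves implicit.
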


\begin{proof}

From Theorem \ref{equal}, in our case the measure $\nu_\lambda$ can be written as $\pi_{\lambda*}\nu_{(\frac 12, \frac 12)}$ and it is equal to the $\pi_2$-projection of an equilibrium state $\hat \mu_\psi$ on $\Sigma_2^+ \times I_\lambda$. Therefore, from Corollary \ref{usorcalc}, $$o(\cl S_\lambda) = \exp\big(\mathop{\lim}\limits_{n \to \infty} \frac 1n \int_{\Sigma_2^+} \log \beta_n(\pi_\lambda \omega) \ d\nu_{(\frac 12, \frac 12)}(\omega)\big)$$
$\cl S_\lambda$ is a system of similarities, thus from Theorem \ref{dim}, $HD(\nu_\lambda)$ is bounded above by the unique zero of the  pressure function with respect to $\sigma: \Sigma_2^+ \to \Sigma_2^+$: $$t \to P_\sigma(t \log \lambda - o(\cl S_\lambda)) = t \log \lambda + \log 2 - \log o(\cl S_\lambda)$$ Hence it follows that $HD(\nu_\lambda) \le \ \frac{\log \frac{2}{o(\cl S_\lambda)}}{|\log \lambda|}$, and  the corresponding bound for $o(\cl S_\lambda)$.

\end{proof}

For any $\lambda \in (\frac 12, 1)$,  the number of overlaps between images $S_{i_1\ldots i_n}(I_\lambda)$ is less than $2^n$, so $1 \le o(\cl S_\lambda) \le 2$. In fact, it turns out that the overlap number of $\cl S_\lambda$ is always strictly less than 2:

\begin{cor}\label{2}
In the above setting, it follows that for all parameters $\lambda \in (\frac 12, 1)$,  $$o(\cl S_\lambda) < 2$$

\end{cor}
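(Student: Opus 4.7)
The plan is to combine Corollary \ref{maxent} with a direct analysis of the fiber structure of the lift map $\Phi(\omega, x) = (\sigma\omega, S_{\omega_1}(x))$ on $\Sigma_2^+ \times I_\lambda$. By Corollary \ref{maxent}, $\log o(\cl S_\lambda) = F_\Phi(\hat\mu_0)$, where $\hat\mu_0$ is the measure of maximal entropy. Since every fiber of $\Phi$ contains at most two points, the trivial bound $F_\Phi(\hat\mu_0) \le \log 2$ already yields $o(\cl S_\lambda) \le 2$, and the task is to upgrade this to a strict inequality.

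First I would work out the fibers explicitly. For $(\eta, y) \in \Sigma_2^+ \times I_\lambda$, the preimage $\Phi^{-1}(\eta, y)$ is the set of pairs $\bigl((i, \eta),\, S_i^{-1}(y)\bigr)$ with $i \in \{1,2\}$ such that $S_i^{-1}(y) \in I_\lambda$. Since $S_1(I_\lambda) = \bigl[-\tfrac{1}{1-\lambda},\, \tfrac{2\lambda-1}{1-\lambda}\bigr]$ and $S_2(I_\lambda) = \bigl[-\tfrac{2\lambda-1}{1-\lambda},\, \tfrac{1}{1-\lambda}\bigr]$, the fiber has cardinality $k(\eta, y) = 2$ precisely when $y$ lies in the overlap $\cl O_\lambda := \bigl[-\tfrac{2\lambda-1}{1-\lambda},\, \tfrac{2\lambda-1}{1-\lambda}\bigr]$, and $k(\eta, y) = 1$ otherwise. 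Because $\lambda < 1$, $\tfrac{2\lambda-1}{1-\lambda} < \tfrac{1}{1-\lambda}$, so $\cl O_\lambda$ is a proper compact subset of $I_\lambda$.

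Next I would apply the standard entropy inequality $F_\Phi(\hat\mu_0) \le \int \log k\, d\hat\mu_0$, which holds because on each finite fiber of cardinality $k$ the conditional distribution is discrete on at most $k$ atoms and hence has entropy at most $\log k$. Since $k$ depends only on the second coordinate and Theorem \ref{equal} gives $\pi_{2*}\hat\mu_0 = \nu_\lambda$, this integral equals $\log 2 \cdot \nu_\lambda(\cl O_\lambda)$. It remains to show $\nu_\lambda(\cl O_\lambda) < 1$: the complement $I_\lambda \setminus \cl O_\lambda$ contains the nonempty open interval $\bigl(\tfrac{2\lambda-1}{1-\lambda},\, \tfrac{1}{1-\lambda}\bigr]$, and by continuity of the canonical coding $\pi_\lambda: \Sigma_2^+ \to I_\lambda$ its preimage is a nonempty open subset of $\Sigma_2^+$, hence contains some cylinder of positive $\nu_{(\frac{1}{2},\frac{1}{2})}$-mass; by Theorem \ref{equal} the interval therefore carries positive $\nu_\lambda$-mass. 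Putting the pieces together gives $F_\Phi(\hat\mu_0) \le \log 2 \cdot \nu_\lambda(\cl O_\lambda) < \log 2$, and exponentiation yields $o(\cl S_\lambda) < 2$ (indeed even the quantitative bound $o(\cl S_\lambda) \le 2^{\nu_\lambda(\cl O_\lambda)} < 2$).

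The main point meriting care is the entropy bound $F_\Phi(\hat\mu_0) \le \int \log k\, d\hat\mu_0$. Although intuitively clear, it involves conditioning the point partition of a Lebesgue space on the (finite) fiber partition, so one justifies it either via Parry's Jacobian formula $F_\Phi(\hat\mu_0) = \int \log J_\Phi(\hat\mu_0)\, d\hat\mu_0$ together with the estimate $J_\Phi(\hat\mu_0) \le k$ for the measure of maximal entropy (a special case of the comparison (\ref{jaco}) with $\psi$ constant, for which the distortion constant is trivial), or more directly from the definition $F_\Phi = H_{\hat\mu_0}(\epsilon \mid \Phi^{-1}\epsilon)$ via Rokhlin's construction. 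With this in hand, everything else reduces to the elementary geometric fact $\cl O_\lambda \subsetneq I_\lambda$ and the full-support property of $\nu_\lambda$, which hold uniformly for every $\lambda \in (\tfrac{1}{2}, 1)$.
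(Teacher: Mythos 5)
Your proof is correct, but it takes a genuinely different route from the paper. The paper disposes of this corollary in one line: if $o(\cl S_\lambda)=2$, the bound $o(\cl S_\lambda)\le 2\lambda^{HD(\nu_\lambda)}$ from Theorem \ref{dimest} forces $\lambda^{HD(\nu_\lambda)}\ge 1$, which is impossible for $\lambda<1$ (implicitly using $HD(\nu_\lambda)>0$). You instead bypass the dimension machinery entirely and argue at the level of the folding entropy: you identify the overlap interval $\cl O_\lambda=\bigl[-\tfrac{2\lambda-1}{1-\lambda},\tfrac{2\lambda-1}{1-\lambda}\bigr]$ on which fibers of $\Phi$ have two points, invoke $F_\Phi(\hat\mu_0)\le\int\log k\,d\hat\mu_0=\log 2\cdot\nu_\lambda(\cl O_\lambda)$, and check that the complement of $\cl O_\lambda$ carries positive $\nu_\lambda$-mass (e.g.\ the cylinder $[2\ldots 2]$ of length $n$ projects into $S_2^n(I_\lambda)$, which for large $n$ lies outside $\cl O_\lambda$). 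Your computations of $S_1(I_\lambda)$, $S_2(I_\lambda)$ and their intersection are right, and the conditional-entropy bound $H(\mu_C)\le\log|C|$ on finite fibers is the clean way to justify $F_\Phi\le\int\log k$; I would lean on that rather than on your parenthetical claim that the distortion constant in (\ref{jaco})--(\ref{comp}) is trivial for constant $\psi$, which the paper's estimates do not literally give. What your approach buys is independence from Theorem \ref{dim}/\ref{dimest} and from the (true but unproved-here) fact that $HD(\nu_\lambda)>0$, plus the explicit quantitative improvement $o(\cl S_\lambda)\le 2^{\nu_\lambda(\cl O_\lambda)}$; what the paper's route buys is brevity, since all the work is already packaged in the dimension estimate.
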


\begin{proof}

If $o(\cl S_\lambda) = 2$, then from Theorem \ref{dimest}, it would follow that $\lambda = 1$. Hence contradiction.
\end{proof}

For a large set of values of $\lambda$, by using Theorem \ref{dimest} and the above mentioned results of \cite{AZ}, \cite{G}, \cite{H}, \cite{So}, we can obtain more precise estimates for the overlap number:

\begin{cor}\label{numerical}
 a) For $\lambda$ outside a set of dimension zero in $(\frac 12, 1)$, we have $$o(\cl S_\lambda) \le 2\lambda$$
This happens for example when $\lambda^{-1}$ is an algebraic number whose monic polynomial has other roots outside the unit circle and constant coefficient $\pm 2$. In particular, if $\lambda = 2^{-\frac 1m}$ for $m \ge 2$,  then $$o(\cl S_\lambda) \le 2^{\frac {m-1}{m}} $$

b) In case $\lambda = \frac{\sqrt 5 -1}{2}$, then \ $o(\cl S_\lambda) \le 2 \lambda^{0.99557} < 1.25$.

\end{cor}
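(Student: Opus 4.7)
The plan is to read this corollary as a direct consequence of Theorem \ref{dimest}, which already provides the bound $o(\cl S_\lambda) \le 2\lambda^{HD(\nu_\lambda)}$. Since $\lambda \in (0,1)$, the map $t \mapsto \lambda^t$ is strictly decreasing, so any lower bound on $HD(\nu_\lambda)$ from the literature translates into an upper bound on $o(\cl S_\lambda)$. The whole argument will be the assembly of such literature bounds.

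For part (a), I will invoke Hochman's theorem \cite{H}, which asserts $HD(\nu_\lambda) = 1$ for all $\lambda$ outside an exceptional set of Hausdorff dimension zero in $(\tfrac{1}{2},1)$. Plugging this into Theorem \ref{dimest} yields $o(\cl S_\lambda) \le 2\lambda^{1} = 2\lambda$. To cover the algebraic subcase, I will use Garsia's theorem \cite{G}: when $\lambda^{-1}$ is an algebraic integer in $(1,2)$ whose minimal polynomial has constant term $\pm 2$ and remaining roots outside the closed unit disk, $\nu_\lambda$ is absolutely continuous and therefore $HD(\nu_\lambda) = 1$, again giving $o(\cl S_\lambda) \le 2\lambda$. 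The concrete instance $\lambda = 2^{-1/m}$ then reduces to verifying Garsia's hypotheses for $\lambda^{-1} = 2^{1/m}$: its minimal polynomial is $x^m - 2$, whose constant term is $-2$ and whose non-real roots $2^{1/m} e^{2\pi i k/m}$ for $1 \le k \le m-1$ all have modulus $2^{1/m} > 1$. Hence $HD(\nu_\lambda) = 1$ and Theorem \ref{dimest} gives $o(\cl S_\lambda) \le 2 \cdot 2^{-1/m} = 2^{(m-1)/m}$.

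For part (b) with $\lambda = \tfrac{\sqrt{5}-1}{2}$, I will use the Alexander–Zagier estimate \cite{AZ}, $HD(\nu_\lambda) > 0.99557$. By monotonicity of $\lambda^{t}$ in $t$, Theorem \ref{dimest} gives $o(\cl S_\lambda) \le 2\lambda^{HD(\nu_\lambda)} \le 2\lambda^{0.99557}$. A direct numerical evaluation with $\lambda \approx 0.61803$ shows $2\lambda^{0.99557} < 1.25$, yielding the stated bound.

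There is no real obstacle beyond careful bookkeeping: the corollary is genuinely a packaging of the new inequality in Theorem \ref{dimest} with three known dimension/absolute-continuity results. The only point that is easy to overlook is the direction of the inequality under exponentiation, namely that a \emph{lower} bound on $HD(\nu_\lambda)$ produces an \emph{upper} bound on $\lambda^{HD(\nu_\lambda)}$ precisely because $\lambda < 1$; this is what lets each of the cited results translate into an upper estimate for $o(\cl S_\lambda)$.
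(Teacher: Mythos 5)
Your proposal is correct and matches the paper's intended argument exactly: the paper gives no separate proof of this corollary, only the remark that it follows from Theorem \ref{dimest} combined with the cited results of Hochman, Garsia, and Alexander--Zagier, which is precisely the assembly you carry out (including the correct observation that a lower bound on $HD(\nu_\lambda)$ gives an upper bound on $\lambda^{HD(\nu_\lambda)}$ since $\lambda<1$). The only nitpick is that for even $m$ the root $-2^{1/m}$ of $x^m-2$ is real, so "non-real roots" should read "remaining roots"; the modulus computation, which is what matters for Garsia's hypothesis, is unaffected.
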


\

Let now $p$ arbitrary in $(0, 1)$ and denote by  $\nu_{(p, 1-p)}$ the Bernoulli measure on $\Sigma_2^+$ determined by the vector $(p, 1-p)$.
For $\lambda \in (\frac 12, 1)$, one defines the \textit{biased} Bernoulli convolution $\nu_{\lambda, p}$ (see for eg \cite{PS}), where $\nu_{\lambda, p}$ is the $\pi_\lambda$-projection of $\nu_{(p, 1-p)}$  onto the limit set $I_\lambda = [-\frac {1}{1-\lambda}, \frac {1}{1-\lambda}]$. We have as above  the associated lift map $\Phi_\lambda: \Sigma_2^+\times I_\lambda \to \Sigma_2^+\times I_\lambda$. From the discussion before Theorem \ref{equal}, there exists a $\Phi_\lambda$-invariant equilibrium measure $\hat\nu_{\lambda, p}$ on $\Sigma_2^+\times I_\lambda$, such that $\pi_{2*}\hat\nu_{\lambda, p} = \nu_{\lambda, p}$. For  integers $0 < k < n$, denote by $W(x, n, k)$ the set of $n$-chains $(i_1, \ldots, i_n) \in \{1, 2\}^n$ from points in $I_\lambda$ to $x$, having exactly $k$ indices $i_j$ equal to 1. From (\ref{betap}), for any $x\in I_\lambda$, $\tau >0$ and $n \ge 2$, we have
$$\beta_n\big(x, \tau |\log\frac{p}{1-p}|, (p, 1-p)\big) = \mathop{\sum}\limits_{k, \ |\frac{k}{n} - p| < \tau}\text{Card} \ W(x, n, k)$$
Thus, for any parameter $\lambda \in (\frac 12, 1)$, it follows  from Theorem \ref{dim} and Corollary \ref{usorcalc}  that:

\begin{cor}\label{biased}
For all $\lambda \in (\frac 12, 1)$ and $p \in (0, 1)$, the biased Bernoulli convolution $\nu_{\lambda, p}$ satisfies:
$$HD(\nu_{\lambda, p}) \  \le \ \frac{\log\frac{2}{o(\cl S_{\lambda}, \hat\nu_{\lambda, p})}}{|\log \lambda|},$$
where $o(\cl S_\lambda, \hat\nu_{\lambda, p})$ denotes the overlap number of $\cl S_\lambda$ with respect to $\hat\nu_{\lambda, p}$, which can be computed by: $$o(\cl S_\lambda, \hat\nu_{\lambda, p}) =  \exp\Big(\mathop{\lim}\limits_{\tau \to 0} \mathop{\lim}\limits_{n \to \infty} \frac 1n  \int_{\Sigma_2^+} \log \mathop{\sum}\limits_{|\frac{k}{n} - p| < \tau} \text{Card} \ W(\pi_\lambda\omega, n, k) \ d\nu_{(p, 1-p)}(\omega)\Big) $$
\end{cor}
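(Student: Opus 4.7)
The plan is to reduce the statement to a direct application of Theorem \ref{dim} and Corollary \ref{usorcalc}, once the Bernoulli potential is written down explicitly and the combinatorics of the weights $p^k(1-p)^{n-k}$ is unpacked.

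First, I would set $\mathbf p = (p, 1-p)$ and consider the H\"older continuous potential $\psi_p: \Sigma_2^+ \times I_\lambda \to \mathbb R$, $\psi_p(\omega, x) = \log p_{\omega_1}$. By Theorem \ref{equal} applied to $\cl S_\lambda$, the projection $\nu_{\lambda, p} = \pi_{\lambda *} \nu_{(p,1-p)}$ coincides with $\pi_{2*} \hat \nu_{\lambda, p}$, where $\hat\nu_{\lambda, p}$ is the unique equilibrium measure of $\psi_p$ with respect to $\Phi_\lambda$. Next, I would invoke Theorem \ref{dim}: since $\cl S_\lambda$ consists of similarities with ratio $\lambda$, the function $\log |\phi_{\omega_1}'(\pi(\sigma\omega))| \equiv \log \lambda$ is constant, so the pressure function
\[
t \longmapsto P_\sigma\bigl(t\log\lambda - \log o(\cl S_\lambda, \hat\nu_{\lambda, p})\bigr) = \log 2 + t \log \lambda - \log o(\cl S_\lambda, \hat\nu_{\lambda, p})
\]
is affine, and its unique zero is $t(\cl S_\lambda, \psi_p) = \log\bigl(2/o(\cl S_\lambda, \hat\nu_{\lambda, p})\bigr) / |\log\lambda|$. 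This yields the claimed upper bound on $HD(\nu_{\lambda, p})$.

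For the integral formula, I would apply Corollary \ref{usorcalc} to $\mathbf p = (p, 1-p)$, giving
\[
o(\cl S_\lambda, \hat\nu_{\lambda, p}) = \exp\Bigl(\lim_{\tau \to 0}\lim_{n \to \infty} \tfrac{1}{n} \int_{\Sigma_2^+} \log \beta_n(\pi_\lambda \omega, \tau, \mathbf p)\, d\nu_{(p,1-p)}(\omega)\Bigr).
\]
The core observation is then combinatorial: if the $n$-chain $(\eta_1,\ldots,\eta_n) \in \{1,2\}^n$ has exactly $k$ indices equal to $1$, then $p_{\eta_1}\cdots p_{\eta_n} = p^k(1-p)^{n-k}$, so
\[
\tfrac{1}{n}\log(p_{\eta_1}\cdots p_{\eta_n}) - h(\mathbf p) = \bigl(\tfrac{k}{n} - p\bigr)\bigl(\log p - \log(1-p)\bigr),
\]
where $h(\mathbf p) = p\log p + (1-p)\log(1-p)$. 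Hence the condition $\bigl|\tfrac{1}{n}\log(p_{\eta_1}\cdots p_{\eta_n}) - h(\mathbf p)\bigr| < \tau\,|\log(p/(1-p))|$ in the definition \eqref{betap} of $\beta_n$ is exactly $|k/n - p| < \tau$, which identifies $\beta_n(x, \tau|\log(p/(1-p))|, \mathbf p)$ with $\sum_{|k/n - p|<\tau} \mathrm{Card}\, W(x, n, k)$. Substituting this into the formula above and relabelling $\tau$ produces the claimed expression for $o(\cl S_\lambda, \hat\nu_{\lambda,p})$.

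There is no genuine obstacle here; the one point that needs a careful word is that in passing from Theorem \ref{dim} to the explicit zero one uses that $\cl S_\lambda$ consists of genuine similarities (so $|\phi_i'|$ is constant and the pressure is affine in $t$), and in matching the two $\beta_n$ descriptions one must be precise about the fact that rescaling the generic window $\tau \mapsto \tau|\log(p/(1-p))|$ does not affect the iterated limit $\lim_{\tau\to 0}\lim_{n\to\infty}$.
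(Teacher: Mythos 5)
Your proposal is correct and follows essentially the same route as the paper: the paper likewise derives the dimension bound from Theorem \ref{dim} (with the affine pressure function for similarities of ratio $\lambda$) and the integral formula from Corollary \ref{usorcalc}, after recording the same combinatorial identity $\beta_n\bigl(x, \tau|\log\frac{p}{1-p}|, (p,1-p)\bigr) = \sum_{|k/n-p|<\tau}\mathrm{Card}\,W(x,n,k)$ in the paragraph preceding the corollary. Your explicit verification of that identity and the remark that rescaling the genericity window by $|\log(p/(1-p))|$ is harmless in the iterated limit are exactly the details the paper leaves implicit.
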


\

\textbf{Acknowledgements:} The second-named author was supported in part by the NSF Grant DMS 1361677.

\

Eugen Mihailescu, \ \  Eugen.Mihailescu\@@imar.ro

Institute of Mathematics of the Romanian Academy, P. O. Box 1-764,
RO 014700, 

Bucharest, Romania.

 Webpage: www.imar.ro/$\sim$mihailes

\

Mariusz Urba\'nski, \ \ urbanski@unt.edu

Department of Mathematics, University of North Texas,
  Denton, TX 76203-1430, USA.

Webpage: www.math.unt.edu/$\sim$urbanski

\end{document}